\documentclass[oneside,reqno,english]{amsart}
\usepackage{textcomp}
\usepackage[latin9]{inputenc}
\usepackage{mathtools}
\usepackage{amsbsy}
\usepackage{amstext}
\usepackage{amsthm}
\usepackage{amssymb}
\usepackage{geometry}
\usepackage{setspace}

\makeatletter
\theoremstyle{plain}
\newtheorem{lem}{\protect\lemmaname}
\theoremstyle{plain}
\newtheorem{thm}{\protect\theoremname}
\theoremstyle{plain}
\newtheorem{cor}{\protect\corollaryname}

\@ifundefined{date}{}{\date{}}
\PassOptionsToPackage{reqno}{amsmath}

\usepackage{mathtools}

\makeatletter
\AtBeginDocument{\tagsleft@false} % ensures numbers appear on the right
\makeatother

\usepackage{chngcntr}
\counterwithout{equation}{section}

\usepackage{microtype}   % improves text spacing and kerning
\usepackage{setspace}    % control line spacing
\makeatother

\usepackage{babel}
\providecommand{\corollaryname}{Corollary}
\providecommand{\lemmaname}{Lemma}
\providecommand{\theoremname}{Theorem}

\begin{document}
\title{Lower Bounds for Densities of\linebreak{}
Transcendental Gamma-Function Derivatives}
\author{Michael R. Powers}
\address{Department of Finance, School of Economics and Management, Tsinghua
University, Beijing, China 100084}
\email{powers@sem.tsinghua.edu.cn}
\date{April 13, 2026}
\begin{abstract}
In recent work, we showed that for all $q\in\tfrac{1}{2}\mathbb{Z}\setminus\mathbb{Z}_{\leq0}$
the sequence $\left\{ \Gamma^{\left(n\right)}\left(q\right)\right\} _{n\geq1}$
contains transcendental elements infinitely often, with the density
of transcendental $\Gamma^{\left(n\right)}\left(q\right)$ among $n\in\left\{ 1,2,\ldots,N\right\} $
bounded below by $\beta\left(N\right)=\max\left\{ 0,\sqrt{N}-5/2\right\} /N$.
For both fixed and variable $n$, we now study the transcendence of
$\Gamma^{\left(n\right)}\left(q\right)$ at both positive lattice
points $q=m\in\left\{ 1,2,\ldots\right\} $ and rationally shifted
lattice points $q=\widetilde{m}\in\left\{ \kappa,\pm1+\kappa,\pm2+\kappa,\ldots\right\} $
(for $\kappa\in\left(0,1\right)\cap\mathbb{Q}$ such that $\Gamma\left(\kappa\right)$
is transcendental). For $n\in\mathbb{Z}_{\geq2}$, we find there are
at most $n-1$ algebraic $\Gamma^{\left(n\right)}\left(m\right)$,
and for $n\in\mathbb{Z}_{\geq1}$, there are at most $n$ algebraic
$\Gamma^{\left(n\right)}\left(\widetilde{m}\right)$ for each one-sided
shifted lattice (i.e., $\widetilde{m}\geq\kappa$ or $\widetilde{m}\leq\kappa$).
This implies that the density of transcendental $\Gamma^{\left(n\right)}\left(m\right)$
among $m\in\left\{ 1,2,\ldots,M\right\} $ has lower bound $\beta_{n}\left(M\right)=1-\min\left\{ n-1,M\right\} /M$,
whereas the density of transcendental $\Gamma^{\left(n\right)}\left(\widetilde{m}\right)$
among either $\widetilde{m}\in\left\{ \kappa,1+\kappa,2+\kappa,\ldots,M+\kappa\right\} $
or $\widetilde{m}\in\left\{ \kappa,-1+\kappa,-2+\kappa,\ldots,-M+\kappa\right\} $
has lower bound $\widetilde{\beta}_{n}\left(M\right)=1-\min\left\{ n,M+1\right\} /\left(M+1\right)$.
Allowing $n$ to vary, we derive lower bounds for the bivariate densities
of both transcendental $\Gamma^{\left(n\right)}\left(m\right)$ among
$\left(n,m\right)\in\left\{ 2,3,\ldots,N\right\} \times\left\{ 1,2,\ldots,M\right\} $
and transcendental $\Gamma^{\left(n\right)}\left(\widetilde{m}\right)$
among $\left(n,\widetilde{m}\right)\in\left\{ 1,2,\ldots,N\right\} \times\left\{ \kappa,1+\kappa,2+\kappa,\ldots,M+\kappa\right\} $
and $\left(n,\widetilde{m}\right)\in\left\{ 1,2,\ldots,N\right\} \times\left\{ \kappa,-1+\kappa,-2+\kappa,\ldots,-M+\kappa\right\} $.
\end{abstract}

\keywords{Gamma-function derivatives; lattice points; transcendence; density
bounds.}
\maketitle

\section{Introduction}

\begin{singlespace}
\noindent Comparatively little is known about the arithmetic properties
of Gamma-function derivatives evaluated at arbitrary points $q\in\mathbb{Q}\setminus\mathbb{Z}_{\leq0}$.\footnote{See Rivoal {[}1{]} and Fischler and Rivoal {[}2{]}.}
In recent work, we showed that for all $q\in\tfrac{1}{2}\mathbb{Z}\setminus\mathbb{Z}_{\leq0}$
the sequence $\left\{ \Gamma^{\left(n\right)}\left(q\right)\right\} _{n\geq1}$
contains transcendental elements infinitely often (see Theorem 1 of
Powers {[}3{]}) and the density of transcendental $\Gamma^{\left(n\right)}\left(q\right)$
among $n\in\left\{ 1,2,\ldots,N\right\} $ is bounded below by
\[
\beta\left(N\right)=\dfrac{\max\left\{ 0,\sqrt{N}-5/2\right\} }{N}
\]
\begin{equation}
\Longrightarrow\beta\left(N\right)\asymp\dfrac{1}{\sqrt{N}}\textrm{ as }N\rightarrow\infty
\end{equation}
(see Theorem 2 of Powers {[}3{]}). For both fixed and variable $n$,
we now study the transcendence of $\Gamma^{\left(n\right)}\left(q\right)$
at both positive lattice points $q=m\in\left\{ 1,2,\ldots\right\} $
and rationally shifted lattice points $q=\widetilde{m}\in\left\{ \kappa,\pm1+\kappa,\pm2+\kappa,\ldots\right\} $
for $\kappa\in\mathcal{K}$, where $\mathcal{K}=\left\{ \kappa:\left(\kappa\in\left(0,1\right)\cap\mathbb{Q}\right)\wedge\left(\Gamma\left(\kappa\right)\textrm{ is transcendental}\right)\right\} $.
(Currently, the only shift values known to satisfy these conditions
are $\kappa\in\mathcal{K}^{*}=\left\{ 1/6,1/4,1/3,1/2,2/3,3/4,5/6\right\} $;
however, it is generally believed that $\Gamma\left(q\right)$ is
transcendental for all $q\in\left(0,1\right)\cap\mathbb{Q}$.)
\end{singlespace}

\begin{singlespace}
In Section 2, we fix $n$ and derive strong upper bounds for the total
number of algebraic $\Gamma^{\left(n\right)}\left(m\right)$ and $\Gamma^{\left(n\right)}\left(\widetilde{m}\right)$,
respectively. Specifically, we find that for $n\in\mathbb{Z}_{\geq2}$
there are at most $n-1$ algebraic $\Gamma^{\left(n\right)}\left(m\right)$,
and for $n\in\mathbb{Z}_{\geq1}$ there are at most $n$ algebraic
$\Gamma^{\left(n\right)}\left(\widetilde{m}\right)$ for each one-sided
shifted lattice (i.e., $\widetilde{m}\geq\kappa$ or $\widetilde{m}\leq\kappa$).
These results immediately imply that the density of transcendental
$\Gamma^{\left(n\right)}\left(m\right)$ among $m\in\left\{ 1,2,\ldots,M\right\} =\mathcal{L}\left(M\right)$
has lower bound
\[
\beta_{n}\left(M\right)=1-\dfrac{\min\left\{ n-1,M\right\} }{M}
\]
\[
\Longrightarrow\beta_{n}\left(M\right)\asymp1-\dfrac{n-1}{M}\rightarrow1\textrm{ as }M\rightarrow\infty,
\]
whereas the density of transcendental $\Gamma^{\left(n\right)}\left(\widetilde{m}\right)$
among either $\widetilde{m}\in\left\{ \kappa,1+\kappa,2+\kappa,\ldots,M+\kappa\right\} =\mathcal{L}_{\kappa}^{\left(+\right)}\left(M\right)$
or $\widetilde{m}\in\left\{ \kappa,-1+\kappa,-2+\kappa,\ldots,-M+\kappa\right\} =\mathcal{L}_{\kappa}^{\left(-\right)}\left(M\right)$
has lower bound
\[
\widetilde{\beta}_{n}\left(M\right)=1-\dfrac{\min\left\{ n,M+1\right\} }{M+1}
\]
\[
\Longrightarrow\widetilde{\beta}_{n}\left(M\right)\asymp1-\dfrac{n}{M}\rightarrow1\textrm{ as }M\rightarrow\infty.
\]

In Section 3, we allow $n$ to vary, evaluating the bivariate densities
of both transcendental $\Gamma^{\left(n\right)}\left(m\right)$ among
$\left(n,m\right)\in\left\{ 2,3,\ldots,N\right\} \times\mathcal{L}\left(M\right)$
and transcendental $\Gamma^{\left(n\right)}\left(\widetilde{m}\right)$
among $\left(n,\widetilde{m}\right)\in\left\{ 1,2,\ldots,N\right\} \times\mathcal{L}_{\kappa}^{\left(+\right)}\left(M\right)$
and $\left(n,\widetilde{m}\right)\in\left\{ 1,2,\ldots,N\right\} \times\mathcal{L}_{\kappa}^{\left(-\right)}\left(M\right)$.
In the former case, this analysis yields lower bounds of 
\[
\beta\left(N,M\right)=\begin{cases}
\left(M-1\right)/\left[2\left(N-1\right)\right], & M\leq N-1\\
1-N/\left(2M\right), & M>N-1
\end{cases}
\]
\[
\Longrightarrow\beta\left(N,M\right)\asymp\begin{cases}
M/\left(2N\right), & M/N\leq1\\
1-N/\left(2M\right), & M/N>1
\end{cases}\textrm{ as }N,M\rightarrow\infty,
\]
whereas in the latter case the lower bounds are given by

\[
\widetilde{\beta}\left(N,M\right)=\begin{cases}
M/\left(2N\right), & M+1\leq N\\
1-\left(N+1\right)/\left[2\left(M+1\right)\right], & M+1>N
\end{cases}
\]
\[
\Longrightarrow\widetilde{\beta}\left(N,M\right)\asymp\begin{cases}
M/\left(2N\right), & M/N\leq1\\
1-N/\left(2M\right), & M/N>1
\end{cases}\textrm{ as }N,M\rightarrow\infty.
\]

\end{singlespace}

\section{Lower Bounds with Fixed $n$, Variable $m$}

\subsection{Positive Lattice Points}

\begin{singlespace}
\phantom{}

\medskip{}

\end{singlespace}

\begin{singlespace}
\noindent The present subsection addresses the transcendence of $\Gamma^{\left(n\right)}\left(m\right)$
at positive lattice points $m\in\left\{ 1,2,\ldots\right\} =\mathcal{L}\left(\infty\right)$
for fixed $n\in\mathbb{Z}_{\geq2}$. Specifically, Theorem 1 provides
an upper bound for the total number of algebraic $\Gamma^{\left(n\right)}\left(m\right)$
in §2.1.3, whereas Corollary 1 gives the corresponding lower bound
for the density of transcendental $\Gamma^{\left(n\right)}\left(m\right)$
among $m\in\mathcal{L}\left(M\right)$ in §2.1.4. Mathematical preliminaries
are presented in §2.1.1 and §2.1.2 below.\medskip{}

\noindent\textbf{\emph{2.1.1\enskip{}A Useful Identity}}\medskip{}

\noindent For $m\in\mathbb{Z}_{\geq1}$, the functional equation $\Gamma\left(z+1\right)=z\Gamma\left(z\right)$
gives
\[
\Gamma\left(m+t\right)=\Gamma\left(1+t\right)\prod_{u=1}^{m-1}\left(u+t\right)
\]
\[
=\left(m-1\right)!\Gamma\left(1+t\right)\prod_{u=1}^{m-1}\left(1+\frac{t}{u}\right)
\]
\begin{equation}
=\left(m-1\right)!\Gamma\left(1+t\right)\sum_{v=0}^{m-1}e_{v}\left(1,\frac{1}{2},\dots,\frac{1}{m-1}\right)t^{v},
\end{equation}
where ${\textstyle \prod_{u=1}^{m-1}\left(u+t\right)\coloneqq1}$
for $m=1$ and $e_{v}\left(x_{1},x_{2},\dots,x_{k}\right)$ denotes
the $v^{\textrm{th}}$ elementary symmetric polynomial with $e_{0}\left(x_{1},x_{2},\dots,x_{k}\right)\coloneqq1$,
$e_{0}\left(\emptyset\right)\coloneqq1$, $e_{v}\left(x_{1},x_{2},\dots,x_{k}\right)\coloneqq0$
for $v>k$, and $e_{v}\left(\emptyset\right)\coloneqq0$ for $v>0$
(in particular, the argument of $e_{v}\left(1,1/2,\dots,1/\left(m-1\right)\right)$
is treated as $\emptyset$ for $m=1$). Differentiating (2) $n$ times
with respect to $t$ and then evaluating at $t=0$ gives
\[
\Gamma^{\left(n\right)}\left(m\right)=\left(m-1\right)!\sum_{\ell=0}^{n}\dfrac{n!}{\ell!}e_{n-\ell}\left(1,\frac{1}{2},\dots,\frac{1}{m-1}\right)\Gamma^{\left(\ell\right)}\left(1\right)
\]
\begin{equation}
=\sum_{\ell=0}^{n}T_{n,\ell}\left(m\right)\Gamma^{\left(\ell\right)}\left(1\right),
\end{equation}
where
\[
T_{n,\ell}\left(m\right)=\left(m-1\right)!\frac{n!}{\ell!}e_{n-\ell}\left(1,\frac{1}{2},\dots,\frac{1}{m-1}\right)\in\mathbb{Q}
\]
and $\Gamma^{\left(0\right)}\left(1\right)=\Gamma\left(1\right)=1$.
\end{singlespace}

\begin{singlespace}
The identity in (3), which holds for all $n\in\mathbb{Z}_{\geq0}$
and $m\in\mathbb{Z}_{\geq1}$, is fundamental to the present analysis.
In particular, we consider a system of $n\in\mathbb{Z}_{\geq2}$ equations
of this form, based on a set of arbitrary $m$ values, $\mathcal{M}=\left\{ m_{1},m_{2},\ldots,m_{n}\right\} $
with $m_{r}\in\mathbb{Z}_{\geq1}$ for all $r\in\left\{ 1,2,\ldots,n\right\} $
and $m_{1}<m_{2}<\ldots<m_{n}$. Specifically,
\begin{equation}
\left[\begin{array}{c}
\Gamma^{\left(n\right)}\left(m_{1}\right)\\
\Gamma^{\left(n\right)}\left(m_{2}\right)\\
\vdots\\
\Gamma^{\left(n\right)}\left(m_{n}\right)
\end{array}\right]=\mathbf{T}_{n}\left(\mathcal{M}\right)\left[\begin{array}{c}
\Gamma^{\left(1\right)}\left(1\right)\\
\Gamma^{\left(2\right)}\left(1\right)\\
\vdots\\
\Gamma^{\left(n\right)}\left(1\right)
\end{array}\right]+\left[\begin{array}{c}
T_{n,0}\left(m_{1}\right)\\
T_{n,0}\left(m_{2}\right)\\
\vdots\\
T_{n,0}\left(m_{n}\right)
\end{array}\right],
\end{equation}
with $n\times n$ coefficient matrix
\[
\mathbf{T}_{n}\left(\mathcal{M}\right)=\left[\begin{array}{cccc}
T_{n,1}\left(m_{1}\right) & T_{n,2}\left(m_{1}\right) & \cdots & T_{n,n}\left(m_{1}\right)\\
T_{n,1}\left(m_{2}\right) & T_{n,2}\left(m_{2}\right) & \cdots & T_{n,n}\left(m_{2}\right)\\
\vdots & \vdots & \ddots & \vdots\\
T_{n,1}\left(m_{n}\right) & T_{n,2}\left(m_{n}\right) & \cdots & T_{n,n}\left(m_{n}\right)
\end{array}\right].
\]
\medskip{}

\end{singlespace}

\begin{singlespace}
\noindent\textbf{\emph{2.1.2\enskip{}Invertibility of the Coefficient
Matrix}}\medskip{}

\end{singlespace}

\begin{singlespace}
\noindent To study the arithmetic nature of the values $\left\{ \Gamma^{\left(n\right)}\left(m_{1}\right),\Gamma^{\left(n\right)}\left(m_{2}\right),\ldots,\Gamma^{\left(n\right)}\left(m_{n}\right)\right\} $,
we first confirm that the coefficient matrix $\mathbf{T}_{n}\left(\mathcal{M}\right)$
is invertible (i.e., nonsingular) in $\mathbb{Q}$ via the following
two lemmas.
\end{singlespace}
\begin{lem}
For any $n\in\mathbb{Z}_{\geq1}$ and set $\mathcal{M}^{\prime}=\left\{ m_{1}^{\prime},m_{2}^{\prime},\ldots,m_{n}^{\prime}\right\} $
(with $m_{r}^{\prime}\in\mathbb{Z}_{\geq0}$ for all $r\in\left\{ 1,2,\ldots,n\right\} $
and $m_{1}^{\prime}<m_{2}^{\prime}<\ldots<m_{n}^{\prime}$), let $\boldsymbol{x}^{\left(m_{r}^{\prime}\right)}=\left[x_{1},x_{2},\ldots,x_{m_{r}^{\prime}}\right]$
(with fixed $x_{i}\in\mathbb{R}_{>0}$ for $i\in\left\{ 1,2,\ldots,m_{n}^{\prime}\right\} $)
and define the $n\times n$ matrix
\[
\mathbf{E}_{n}\left(\mathcal{M}^{\prime}\right)=\left[\begin{array}{cccc}
1 & e_{1}\left(\boldsymbol{x}^{\left(m_{1}^{\prime}\right)}\right) & \cdots & e_{n-1}\left(\boldsymbol{x}^{\left(m_{1}^{\prime}\right)}\right)\\
1 & e_{1}\left(\boldsymbol{x}^{\left(m_{2}^{\prime}\right)}\right) & \cdots & e_{n-1}\left(\boldsymbol{x}^{\left(m_{2}^{\prime}\right)}\right)\\
\vdots & \vdots & \ddots & \vdots\\
1 & e_{1}\left(\boldsymbol{x}^{\left(m_{n}^{\prime}\right)}\right) & \cdots & e_{n-1}\left(\boldsymbol{x}^{\left(m_{n}^{\prime}\right)}\right)
\end{array}\right].
\]
Then $\det\left(\mathbf{E}_{n}\left(\mathcal{M}^{\prime}\right)\right)>0$.
\end{lem}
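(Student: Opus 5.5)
The plan is to realize $\mathbf{E}_{n}\left(\mathcal{M}^{\prime}\right)$ as a path matrix of a planar acyclic network and apply the Lindstr\"om--Gessel--Viennot (LGV) lemma. The determinant then becomes a sum of positive weights of non-intersecting path families, and it remains to show that at least one such family exists.

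\emph{Step 1 (lattice-path model).} Consider the planar lattice with vertices $(a,b)\in\mathbb{Z}^{2}$. For each $i\geq1$ allow two steps from $(i-1,b)$: a horizontal step to $(i,b)$ of weight $1$, and a diagonal step to $(i,b+1)$ of weight $x_{i}$. A path from $(0,-j)$ to $(m,0)$ consists of exactly $m$ steps, $j$ of them diagonal. Its weight is therefore $x_{i_{1}}\cdots x_{i_{j}}$ for the set $\{i_{1}<\dots<i_{j}\}\subseteq\{1,\dots,m\}$ of diagonal-step indices. Summing over paths gives
\[
\sum_{\text{paths }(0,-j)\to(m,0)}\mathrm{wt}=e_{j}\left(\boldsymbol{x}^{\left(m\right)}\right).
\]
This also matches the conventions $e_{0}=1$ and $e_{j}=0$ for $j>m$, since no path exists when $j>m$. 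So with sources $A_{j}=(0,-j)$ for $j=0,\dots,n-1$ and sinks $B_{r}=(m_{r}^{\prime},0)$ for $r=1,\dots,n$, the path matrix with rows indexed by $r$ and columns by $j$ is exactly $\mathbf{E}_{n}\left(\mathcal{M}^{\prime}\right)$.

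\emph{Step 2 (LGV and sign).} The network is acyclic, since every step increases the first coordinate. The sources lie on the left boundary line $a=0$, ordered downward, and the sinks lie on the line $b=0$, ordered rightward. By planarity, any vertex-disjoint family of paths must connect $A_{r-1}$ to $B_{r}$ for every $r$; any other pairing would force two paths to cross, and crossing lattice paths of this type share a vertex. This pairing is precisely the identity permutation for the row and column indexing of $\mathbf{E}_{n}\left(\mathcal{M}^{\prime}\right)$. The LGV lemma then gives
\[
\det\left(\mathbf{E}_{n}\left(\mathcal{M}^{\prime}\right)\right)=\sum\prod\mathrm{wt},
\]
where the sum runs over non-intersecting families with $A_{r-1}\to B_{r}$. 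Every term is a product of the positive reals $x_{i}$, so the determinant is $\geq0$.

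\emph{Step 3 (existence of a family; main point).} Strict positivity requires exhibiting one non-intersecting family, and this is the step to check carefully. Because $0\leq m_{1}^{\prime}<m_{2}^{\prime}<\cdots$, we have $m_{r}^{\prime}\geq r-1$. Define path $r$ from $A_{r-1}$ to go horizontally until $a=m_{r}^{\prime}-(r-1)$, then take $r-1$ diagonal steps to $B_{r}$. For $a$ on its range, its height is
\[
h_{r}(a)=-(r-1)+\max\{0,\,a-m_{r}^{\prime}+r-1\}.
\]
Since $m_{r+1}^{\prime}-r\geq m_{r}^{\prime}-(r-1)$, we get $h_{r+1}(a)\leq h_{r}(a)-1$ wherever both paths are defined. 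Moreover path $r$ ends at $a=m_{r}^{\prime}$ before path $r+1$ reaches height $0$, so no vertex is shared. Hence the family is non-intersecting, the LGV sum has at least one strictly positive term, and $\det\left(\mathbf{E}_{n}\left(\mathcal{M}^{\prime}\right)\right)>0$.

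The only delicate points are the sign bookkeeping in Step 2 (checking that the planar ordering yields the identity permutation) and the inequality $m_{r+1}^{\prime}-r\geq m_{r}^{\prime}-r+1$ in Step 3. Both follow from strict monotonicity of the $m_{r}^{\prime}$.
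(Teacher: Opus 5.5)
Your proof is correct, and it takes a different route from the paper. The paper argues by induction on $n$. Subtracting consecutive rows and expanding along the first column reduces $\det(\mathbf{E}_n(\mathcal{M}'))$ to $\det(\mathbf{D}_{n-1}(\mathcal{M}'))$. The recurrence $e_c(\boldsymbol{x}^{(m)})=e_c(\boldsymbol{x}^{(m-1)})+x_m e_{c-1}(\boldsymbol{x}^{(m-1)})$ then factors $\mathbf{D}_{n-1}=\mathbf{A}\mathbf{B}$, and Cauchy--Binet is applied. In that expansion each minor of $\mathbf{A}$ is either $0$ or the positive diagonal product $\prod_r x_{s_r}$, the latter exactly when $m'_r<s_r\le m'_{r+1}$. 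The matching minor of $\mathbf{B}$ is an $\mathbf{E}_{n-1}$-type determinant at the indices $s_r-1$, positive by the induction hypothesis, and the choice $s_r=m'_r+1$ shows the sum is nonempty.

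You instead obtain, with no induction, a subtraction-free formula for the determinant as a weighted count of non-intersecting path families. Positivity is then immediate, and strictness reduces to the explicit family of Step 3. In your model, the paper's Cauchy--Binet step sorts the paths counted by $e_c(\boldsymbol{x}^{(m'_{r+1})})-e_c(\boldsymbol{x}^{(m'_r)})$ by the index $j>m'_r$ of their last diagonal step, so both proofs rest on the same positivity mechanism. The paper's version is self-contained, using only row operations and Cauchy--Binet. Yours imports LGV but gives a closed combinatorial expression, and the same pairing argument also yields nonnegativity of every minor of the array $[e_j(\boldsymbol{x}^{(m)})]$.

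In writing it up, make a few points explicit:
\begin{itemize}
\item State that heights are nondecreasing along each path, so every path stays in $b\le0$. This is what forces the pairing in Step 2: if $j<k$ but the path from $A_j$ ended at a later sink than the path from $A_k$, staying strictly above the latter would require height $\ge1$ at the latter's endpoint.
\item In Step 3, chain $h_{r+1}\le h_r-1$ over the common domain $[0,m'_r]$ so that non-consecutive pairs are also covered.
\item When $m'_1=0$, the coincidence $A_0=B_1$ is harmless, because no other path visits $(0,0)$.
\end{itemize}
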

\begin{proof}
\begin{singlespace}
\phantom{}
\end{singlespace}

\begin{singlespace}
\medskip{}
\noindent We proceed by induction on $n$, beginning with $n=1$, for which
\[
\mathbf{E}_{1}\left(\mathcal{M}^{\prime}\right)=\left[1\right]\Longrightarrow\det\left(\mathbf{E}_{1}\left(\mathcal{M}^{\prime}\right)\right)=1.
\]
\end{singlespace}

\begin{singlespace}
Now assume the lemma holds for arbitrary $n-1$, and construct a new
$n\times n$ matrix, $\mathbf{F}_{n}\left(\mathcal{M}^{\prime}\right)$,
whose first row is identical to the first row of $\mathbf{E}_{n}\left(\mathcal{M}^{\prime}\right)$,
and whose $r^{\textrm{th}}$ row is given by the arithmetic difference
between the $r^{\textrm{th}}$ and $\left(r-1\right)^{\textrm{th}}$
rows of $\mathbf{E}_{n}\left(\mathcal{M}^{\prime}\right)$; that is,
\[
\left[\begin{array}{cccc}
0 & e_{1}\left(\boldsymbol{x}^{\left(m_{r}^{\prime}\right)}\right)-e_{1}\left(\boldsymbol{x}^{\left(m_{r-1}^{\prime}\right)}\right) & \cdots & e_{n-1}\left(\boldsymbol{x}^{\left(m_{r}^{\prime}\right)}\right)-e_{n-1}\left(\boldsymbol{x}^{\left(m_{r-1}^{\prime}\right)}\right)\end{array}\right]
\]
for $r\in\left\{ 2,3,\ldots,n\right\} $. This new matrix has first
column $\left[\begin{array}{cccc}
1 & 0 & \cdots & 0\end{array}\right]^{\mathrm{T}}$ and $\det\left(\mathbf{F}_{n}\left(\mathcal{M}^{\prime}\right)\right)=\det\left(\mathbf{E}_{n}\left(\mathcal{M}^{\prime}\right)\right)$.
\end{singlespace}

\begin{singlespace}
Using the first column of $\mathbf{F}_{n}\left(\mathcal{M}^{\prime}\right)$
to compute $\det\left(\mathbf{F}_{n}\left(\mathcal{M}^{\prime}\right)\right)$
by the method of minors, we see that
\[
\det\left(\mathbf{F}_{n}\left(\mathcal{M}^{\prime}\right)\right)=\det\left(\mathbf{D}_{n-1}\left(\mathcal{M}^{\prime}\right)\right),
\]
where $\mathbf{D}_{n-1}\left(\mathcal{M}^{\prime}\right)$ is the
$(n-1)\times(n-1)$ matrix{\small
\[
\mathbf{D}_{n-1}\left(\mathcal{M}^{\prime}\right)=\left[\begin{array}{cccc}
e_{1}\left(\boldsymbol{x}^{\left(m_{2}^{\prime}\right)}\right)-e_{1}\left(\boldsymbol{x}^{\left(m_{1}^{\prime}\right)}\right) & e_{2}\left(\boldsymbol{x}^{\left(m_{2}^{\prime}\right)}\right)-e_{2}\left(\boldsymbol{x}^{\left(m_{1}^{\prime}\right)}\right) & \cdots & e_{n-1}\left(\boldsymbol{x}^{\left(m_{2}^{\prime}\right)}\right)-e_{n-1}\left(\boldsymbol{x}^{\left(m_{1}^{\prime}\right)}\right)\\
e_{1}\left(\boldsymbol{x}^{\left(m_{3}^{\prime}\right)}\right)-e_{1}\left(\boldsymbol{x}^{\left(m_{2}^{\prime}\right)}\right) & e_{2}\left(\boldsymbol{x}^{\left(m_{3}^{\prime}\right)}\right)-e_{2}\left(\boldsymbol{x}^{\left(m_{2}^{\prime}\right)}\right) & \cdots & e_{n-1}\left(\boldsymbol{x}^{\left(m_{3}^{\prime}\right)}\right)-e_{n-1}\left(\boldsymbol{x}^{\left(m_{2}^{\prime}\right)}\right)\\
\vdots & \vdots & \ddots & \vdots\\
e_{1}\left(\boldsymbol{x}^{\left(m_{n}^{\prime}\right)}\right)-e_{1}\left(\boldsymbol{x}^{\left(m_{n-1}^{\prime}\right)}\right) & e_{2}\left(\boldsymbol{x}^{\left(m_{n}^{\prime}\right)}\right)-e_{2}\left(\boldsymbol{x}^{\left(m_{n-1}^{\prime}\right)}\right) & \cdots & e_{n-1}\left(\boldsymbol{x}^{\left(m_{n}^{\prime}\right)}\right)-e_{n-1}\left(\boldsymbol{x}^{\left(m_{n-1}^{\prime}\right)}\right)
\end{array}\right].
\]
}{\small\par}
\end{singlespace}

\begin{singlespace}
\noindent From the recurrence
\begin{equation}
e_{c}\left(\boldsymbol{x}^{\left(m_{r}^{\prime}\right)}\right)=\begin{cases}
e_{c}\left(\boldsymbol{x}^{\left(m_{r}^{\prime}-1\right)}\right)+x_{m_{r}^{\prime}}e_{c-1}\left(\boldsymbol{x}^{\left(m_{r}^{\prime}-1\right)}\right), & m_{r}^{\prime}\geq1\\
e_{c}\left(\emptyset\right), & m_{r}^{\prime}=0
\end{cases},
\end{equation}
where $r$ and $c$ are the row and column indices, respectively,
we find further that the generic ($r,c$) entry of $\mathbf{D}_{n-1}\left(\mathcal{M}^{\prime}\right)$
can be written as
\[
D_{r,c}=\sum_{j=m_{r}^{\prime}+1}^{m_{r+1}^{\prime}}x_{j}e_{c-1}\left(\boldsymbol{x}^{\left(j-1\right)}\right).
\]
Now define the $\left(n-1\right)\times m_{n}^{\prime}$ matrix $\mathbf{A}$
by its generic ($r,j$) entry
\[
A_{r,j}=\begin{cases}
x_{j}, & m_{r}^{\prime}<j\leq m_{r+1}^{\prime}\\
0, & \text{otherwise}
\end{cases},
\]
and likewise define the $m_{n}^{\prime}\times\left(n-1\right)$ matrix
$\mathbf{B}$ by its generic ($j,c$) entry
\[
B_{j,c}=e_{c-1}\left(\boldsymbol{x}^{\left(j-1\right)}\right).
\]
Since $\mathbf{D}_{n-1}\left(\mathcal{M}^{\prime}\right)=\mathbf{A}\mathbf{B}$,
it follows from the Cauchy-Binet formula that
\[
\det\left(\mathbf{D}_{n-1}\left(\mathcal{M}^{\prime}\right)\right)=\sum_{\substack{\mathcal{S}\subseteq\left\{ 1,2,\ldots,m_{n}^{\prime}\right\} ,\\
|\mathcal{S}|=n-1
}
}\det\left(\mathbf{A}_{*,\mathcal{S}}\right)\det\left(\mathbf{B}_{\mathcal{S},*}\right).
\]
\end{singlespace}

\begin{singlespace}
Given that the supports of the rows of $\mathbf{A}$ are the disjoint
intervals $\left(m_{r}^{\prime},m_{r+1}^{\prime}\right]$, we know
that $\det\left(\mathbf{A}_{*,\mathcal{S}}\right)=0$ unless $\mathcal{S}=\left\{ s_{1}<s_{2}<\cdots<s_{n-1}\right\} $
with $m_{r}^{\prime}<s_{r}\leq m_{r+1}^{\prime}$ for all $r$, in
which case $\mathbf{A}_{*,\mathcal{S}}$ is diagonal with diagonal
entries $x_{s_{1}},x_{s_{2}},\dots,x_{s_{n-1}}$, implying $\det\left(\mathbf{A}_{*,\mathcal{S}}\right)=\prod_{r=1}^{n-1}x_{s_{r}}>0$.
Moreover, for any such $\mathcal{S}$, the matrix $\mathbf{B}_{\mathcal{S},*}$
has entries 
\[
B_{r^{\prime},c^{\prime}}^{\left(\mathcal{S},*\right)}=e_{c^{\prime}-1}\left(\boldsymbol{x}^{\left(s_{r^{\prime}}-1\right)}\right),
\]
with $0\le s_{1}-1<s_{2}-1<\cdots<s_{n-1}-1$, giving $\mathbf{B}_{\mathcal{S},*}$
the same form as $\mathbf{E}_{n-1}\left(\mathcal{M}^{\prime}\right)$,
so that $\det\left(\mathbf{B}_{\mathcal{S},*}\right)>0$ by the induction
hypothesis. Thus, every non-zero summand in the Cauchy-Binet expansion
is strictly positive, and since there exists at least one $\mathcal{S}$
such that $m_{r}^{\prime}<s_{r}\leq m_{r+1}^{\prime}$ for all $r$
(e.g., we can set $s_{r}=m_{r}^{\prime}+1$), it follows that
\[
\det\left(\mathbf{D}_{n-1}\left(\mathcal{M}^{\prime}\right)\right)>0
\]
\[
\Longrightarrow\det\left(\mathbf{E}_{n}\left(\mathcal{M}^{\prime}\right)\right)=\det\left(\mathbf{F}_{n}\left(\mathcal{M}^{\prime}\right)\right)>0.
\]
\end{singlespace}
\end{proof}
\begin{lem}
For any $n\in\mathbb{Z}_{\geq1}$ and set $\mathcal{M}=\left\{ m_{1},m_{2},\ldots,m_{n}\right\} $
(with $m_{r}\in\mathbb{Z}_{\geq1}$ for all $r\in\left\{ 1,2,\ldots,n\right\} $
and $m_{1}<m_{2}<\ldots<m_{n}$), $\mathbf{T}_{n}\left(\mathcal{M}\right)$
is nonsingular over $\mathbb{Q}$.
\end{lem}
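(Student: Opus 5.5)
We reduce $\mathbf{T}_{n}\left(\mathcal{M}\right)$ to a matrix of the form treated in Lemma 1 by factoring out nonzero rational scalars from its rows and columns. By definition,
\[
T_{n,\ell}\left(m_{r}\right)=\left(m_{r}-1\right)!\,\frac{n!}{\ell!}\,e_{n-\ell}\left(1,\tfrac{1}{2},\dots,\tfrac{1}{m_{r}-1}\right),\qquad\ell\in\left\{ 1,2,\ldots,n\right\} .
\]
We pull the factor $\left(m_{r}-1\right)!\neq0$ out of row $r$ and the factor $n!/\ell!\neq0$ out of column $\ell$. This gives
\[
\det\left(\mathbf{T}_{n}\left(\mathcal{M}\right)\right)=\left(\prod_{r=1}^{n}\left(m_{r}-1\right)!\right)\left(\prod_{\ell=1}^{n}\frac{n!}{\ell!}\right)\det\left(\mathbf{G}\right),
\]
where $\mathbf{G}$ has $(r,\ell)$ entry $e_{n-\ell}\left(\boldsymbol{x}^{\left(m_{r}-1\right)}\right)$ with $x_{i}=1/i>0$.

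Next we reverse the column order of $\mathbf{G}$, sending column $\ell$ to column $c=n-\ell+1$. The new $(r,c)$ entry is $e_{c-1}\left(\boldsymbol{x}^{\left(m_{r}^{\prime}\right)}\right)$ with $m_{r}^{\prime}=m_{r}-1$. The reversal changes the determinant only by the sign $\left(-1\right)^{n(n-1)/2}$.

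The set $\mathcal{M}^{\prime}=\left\{ m_{1}-1,\ldots,m_{n}-1\right\} $ consists of nonnegative integers with $m_{1}^{\prime}<m_{2}^{\prime}<\cdots<m_{n}^{\prime}$, since $m_{r}\geq1$ and the $m_{r}$ are strictly increasing. The first column of the reversed matrix is $e_{0}=1$ throughout, including when $m_{r}^{\prime}=0$, by the convention $e_{0}\left(\emptyset\right)=1$. Hence the reversed matrix is exactly $\mathbf{E}_{n}\left(\mathcal{M}^{\prime}\right)$ with $x_{i}=1/i$.

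Lemma 1 then gives $\det\left(\mathbf{E}_{n}\left(\mathcal{M}^{\prime}\right)\right)>0$. Combined with the sign and the nonzero scalar factors above,
\[
\det\left(\mathbf{T}_{n}\left(\mathcal{M}\right)\right)=\pm\left(\text{nonzero rational}\right)\cdot\det\left(\mathbf{E}_{n}\left(\mathcal{M}^{\prime}\right)\right)\neq0.
\]
All entries of $\mathbf{T}_{n}\left(\mathcal{M}\right)$ are rational, so the matrix is nonsingular over $\mathbb{Q}$, and its inverse has rational entries.

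The only real difficulty is the bookkeeping that matches the indices with Lemma 1. One must check that column $\ell=n$ corresponds to $e_{0}$ and column $\ell=1$ to $e_{n-1}$, so that the reversed columns run over $e_{0},\ldots,e_{n-1}$ as Lemma 1 requires. One must also check that the shift $m\mapsto m-1$ respects the conventions for $e_{v}$ on short or empty arguments, in particular that $e_{v}=0$ when $v$ exceeds the number of variables. Both conventions agree with those used in Lemma 1, so no further argument is needed.
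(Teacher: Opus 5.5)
Your proof is correct and follows the paper's argument essentially verbatim. The paper likewise rescales columns by $c!/n!$ and rows by $(m_r-1)!$ (the latter implicitly, via ``row-scaled''), reverses the column order, and identifies the result with $\mathbf{E}_n(\mathcal{M}')$ for $m_r'=m_r-1$ and $x_s=1/s$ before invoking Lemma 1; your explicit sign $(-1)^{n(n-1)/2}$ and your check of the empty-argument conventions add care but do not change the route.
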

\begin{proof}
\begin{singlespace}
\phantom{}
\end{singlespace}

\begin{singlespace}
\medskip{}
\noindent In defining the matrix $\mathbf{T}_{n}\left(\mathcal{M}\right)$,
we used
\[
T_{n,c}\left(m_{r}\right)=\frac{\left(m_{r}-1\right)!n!}{c!}e_{n-c}\left(1,\frac{1}{2},\dots,\frac{1}{m_{r}-1}\right)
\]
as the generic entry for $r,c\in\left\{ 1,2,\ldots,n\right\} $. Now
note that if we multiply each column of this matrix by the non-zero
rational factor $c!/n!$ and then reverse the column order (so that
$\left(m_{r}-1\right)!e_{n-c}\left(1,1/2,\dots,1/\left(m_{r}-1\right)\right)$
becomes $\left(m_{r}-1\right)!e_{c-1}\left(1,1/2,\dots,1/\left(m_{r}-1\right)\right)$),
we transform $\mathbf{T}_{n}\left(\mathcal{M}\right)$ into a row-scaled
version of the matrix $\mathbf{E}_{n}\left(\mathcal{M}^{\prime}\right)$
(of Lemma 1) with $m_{r}^{\prime}=m_{r}-1$ and $x_{s}=1/s$. Since
the two steps of this transformation do not affect nonsingularity,
it follows from Lemma 1 that
\[
\det\left(\mathbf{T}_{n}\left(\mathcal{M}\right)\right)\neq0,
\]
so that $\mathbf{T}_{n}\left(\mathcal{M}\right)$ is nonsingular.
Given that all entries of the matrix lie in $\mathbb{Q}$, we know
that all entries of its inverse, $\mathbf{T}_{n}^{-1}\left(\mathcal{M}\right)$,
lie in $\mathbb{Q}$ as well.
\end{singlespace}
\end{proof}
\medskip{}

\begin{singlespace}
\noindent\textbf{\emph{2.1.3\enskip{}Upper Bound for Number of Algebraic
Values}}\medskip{}

\end{singlespace}

\begin{singlespace}
\noindent Having determined that the coefficient matrix $\mathbf{T}_{n}\left(\mathcal{M}\right)$
is invertible, we now apply this result to the linear system in (4)
to provide an upper bound for the number of algebraic values $\Gamma^{\left(n\right)}\left(m\right)$
for fixed $n\in\mathbb{Z}_{\geq2}$.
\end{singlespace}
\begin{thm}
For $n\in\mathbb{Z}_{\geq2}$,
\[
\#\left(m\in\mathbb{Z}_{\geq1}:\Gamma^{\left(n\right)}\left(m\right)\textrm{ is algebraic}\right)\le n-1.
\]
\end{thm}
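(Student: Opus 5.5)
The plan is a proof by contradiction that combines the linear system (4) with Lemma 2, and then derives a contradiction from a known transcendence result about $\Gamma'(1)$ and $\Gamma''(1)$. Suppose, for some fixed $n\in\mathbb{Z}_{\geq2}$, that there are at least $n$ distinct positive integers $m$ at which $\Gamma^{(n)}(m)$ is algebraic. Choose $n$ of them and order them to form $\mathcal{M}=\{m_1<m_2<\cdots<m_n\}\subset\mathbb{Z}_{\geq1}$. Then the left-hand vector of (4) has algebraic entries, and the vector $\bigl[T_{n,0}(m_r)\bigr]_{r}$ has rational entries.

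By Lemma 2, $\mathbf{T}_n(\mathcal{M})$ is invertible with $\mathbf{T}_n^{-1}(\mathcal{M})$ having rational entries. Solving (4) therefore gives
\[
\bigl[\Gamma^{(1)}(1),\ldots,\Gamma^{(n)}(1)\bigr]^{\mathrm{T}}=\mathbf{T}_n^{-1}(\mathcal{M})\Bigl(\bigl[\Gamma^{(n)}(m_r)\bigr]_{r}-\bigl[T_{n,0}(m_r)\bigr]_{r}\Bigr).
\]
Each $\Gamma^{(\ell)}(1)$ with $1\leq\ell\leq n$ is then a $\mathbb{Q}$-linear combination of algebraic numbers, hence algebraic. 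In particular, because $n\geq2$, both $\Gamma'(1)$ and $\Gamma''(1)$ are algebraic.

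To reach the contradiction, I will use the classical values
\[
\Gamma'(1)=-\gamma,\qquad \Gamma''(1)=\gamma^2+\frac{\pi^2}{6}.
\]
These follow from $\Gamma'/\Gamma=\psi$ together with $\psi(1)=-\gamma$ and $\psi'(1)=\zeta(2)=\pi^2/6$, via $\Gamma''(1)=\Gamma(1)\bigl(\psi'(1)+\psi(1)^2\bigr)$. They give
\[
\pi^2=6\bigl(\Gamma''(1)-\Gamma'(1)^2\bigr),
\]
so $\pi^2$, and hence $\pi$, would be algebraic. This contradicts Lindemann's theorem, so at most $n-1$ of the values $\Gamma^{(n)}(m)$ can be algebraic.

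The main point needing care is the final step. The unknown arithmetic nature of $\gamma$ cannot be used, so one must find an algebraic combination of the $\Gamma^{(\ell)}(1)$ that eliminates $\gamma$ and isolates $\pi$. This is exactly why the hypothesis $n\geq2$ is needed: with only $\Gamma'(1)$ available, nothing is known.
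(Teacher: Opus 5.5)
Your proposal is correct and follows essentially the same route as the paper. In both, Lemma 2 lets you invert $\mathbf{T}_n(\mathcal{M})$ over $\mathbb{Q}$, which forces $\Gamma'(1)$ and $\Gamma''(1)$ to be algebraic. The identity $\Gamma''(1)-\Gamma'(1)^2=\zeta(2)=\pi^2/6$ then contradicts the transcendence of $\pi$.
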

\begin{proof}
\begin{singlespace}
\phantom{}
\end{singlespace}

\begin{singlespace}
\medskip{}
\noindent Fixing $n$, consider a set $\mathcal{M}=\left\{ m_{1},m_{2},\ldots,m_{n}\right\} $
as defined above and assume (for purposes of contradiction) that $\Gamma^{\left(n\right)}\left(m_{r}\right)\in\overline{\mathbb{Q}}$
for all $r\in\left\{ 1,2,\ldots,n\right\} $. Given that $\mathbf{T}_{n}\left(\mathcal{M}\right)$
is invertible (by Lemma 2), we then can solve for the column vector
$\left[\Gamma^{\left(1\right)}\left(1\right),\Gamma^{\left(2\right)}\left(1\right),\ldots,\Gamma^{\left(n\right)}\left(1\right)\right]^{\mathrm{T}}$
in (4) as
\begin{equation}
\left[\begin{array}{c}
\Gamma^{\left(1\right)}\left(1\right)\\
\Gamma^{\left(2\right)}\left(1\right)\\
\vdots\\
\Gamma^{\left(n\right)}\left(1\right)
\end{array}\right]=\left(\mathbf{T}_{n}\left(\mathcal{M}\right)\right)^{-1}\left[\begin{array}{c}
\Gamma^{\left(n\right)}\left(m_{1}\right)-T_{n,0}\left(m_{1}\right)\\
\Gamma^{\left(n\right)}\left(m_{2}\right)-T_{n,0}\left(m_{2}\right)\\
\vdots\\
\Gamma^{\left(n\right)}\left(m_{n}\right)-T_{n,0}\left(m_{n}\right)
\end{array}\right],
\end{equation}
where all elements of the column vector on the right-hand side of
(6) are algebraic numbers under the contradiction hypothesis.
\end{singlespace}

\begin{singlespace}
Since the inverse matrix $\left(\mathbf{T}_{n}\left(\mathcal{M}\right)\right)^{-1}$
contains only rational entries, it follows that all elements of\linebreak{}
$\left[\Gamma^{\left(1\right)}\left(1\right),\Gamma^{\left(2\right)}\left(1\right),\ldots,\Gamma^{\left(n\right)}\left(1\right)\right]^{\mathrm{T}}$
must be algebraic. However, at least one element of the pair\linebreak{}
$\left\{ \Gamma^{\left(1\right)}\left(1\right)=-\gamma,\Gamma^{\left(2\right)}\left(1\right)=\gamma^{2}+\zeta\left(2\right)\right\} $
is transcendental (because otherwise, $\Gamma^{\left(2\right)}\left(1\right)-\left(\Gamma^{\left(1\right)}\left(1\right)\right)^{2}=\zeta\left(2\right)=\pi^{2}/6$
would be algebraic), forcing a contradiction that precludes $\Gamma^{\left(n\right)}\left(m_{r}\right)\in\overline{\mathbb{Q}}$
for all $r\in\left\{ 1,2,\ldots,n\right\} $.\footnote{In the special case of $n=2$, the contradiction hypothesis (in particular,
$\Gamma^{\left(2\right)}\left(1\right)\in\overline{\mathbb{Q}}$)
implies $\Gamma^{\left(1\right)}\left(1\right)\notin\overline{\mathbb{Q}}$. } Therefore, we conclude that at most $n-1$ of the elements in $\left\{ \Gamma^{\left(n\right)}\left(m_{1}\right),\Gamma^{\left(n\right)}\left(m_{2}\right),\ldots,\Gamma^{\left(n\right)}\left(m_{n}\right)\right\} $
can be algebraic. Furthermore, given that $\mathcal{M}$ was constructed
of arbitrary $m\in\mathbb{Z}_{\geq1}$, it is impossible to assemble
\emph{any} set of algebraic $\Gamma^{\left(n\right)}\left(m\right)$
with more than $n-1$ distinct $m$.
\end{singlespace}
\end{proof}
\begin{singlespace}
The above result is rather strong in that it demonstrates the transcendence
of $\Gamma^{\left(n\right)}\left(m\right)$ for ``all but a finite
number of $m$'' for each $n\in\mathbb{Z}_{\geq2}$, whereas Theorem
1 of Powers {[}3{]} asserts only the transcendence of $\Gamma^{\left(n\right)}\left(m\right)$
for ``infinitely many $n$'' for each $m\in\mathbb{Z}_{\geq1}$.
In particular, for the lowest value of $n$ addressed by Theorem 1
above (i.e., $n=2$), we now know that $\Gamma^{\left(2\right)}\left(m\right)$
is transcendental for ``all but at most one $m$''.\footnote{It is intriguing to note that if (somehow) Theorem 1 could be extended
to $n=1$, then that would confirm the transcendence of $\gamma$
(the Euler-Mascheroni constant). }\medskip{}

\end{singlespace}

\begin{singlespace}
\noindent\textbf{\emph{2.1.4\enskip{}Lower Bound for Transcendental
Densities}}\medskip{}

\end{singlespace}

\begin{singlespace}
\noindent For fixed $n\in\mathbb{Z}_{\geq2}$, let
\[
\delta_{n}\left(M\right)=\dfrac{\#\left\{ m\in\mathcal{L}\left(M\right):\Gamma^{\left(n\right)}\left(m\right)\textrm{ is transcendental}\right\} }{M},\quad M\in\mathbb{Z}_{\geq1}
\]
denote the density of the set of transcendental $\Gamma^{\left(n\right)}\left(m\right)$
among $m\in\mathcal{L}\left(M\right)$. The following corollary to
Theorem 1 provides a lower bound for this density.
\end{singlespace}
\begin{cor}
For $n\in\mathbb{Z}_{\geq2}$,
\begin{equation}
\delta_{n}\left(M\right)\geq\beta_{n}\left(M\right)=1-\dfrac{\min\left\{ n-1,M\right\} }{M},
\end{equation}
where
\begin{equation}
\beta_{n}\left(M\right)\asymp1-\dfrac{n-1}{M}\;as\;M\rightarrow\infty.
\end{equation}
 
\end{cor}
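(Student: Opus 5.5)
The bound follows by counting complements, using Theorem 1 directly. Fix $n\in\mathbb{Z}_{\geq2}$ and $M\in\mathbb{Z}_{\geq1}$. Every $m\in\mathcal{L}(M)$ has $\Gamma^{(n)}(m)$ either algebraic or transcendental, and not both. So the number of transcendental values among $m\in\mathcal{L}(M)$ equals $M$ minus the number of $m\in\mathcal{L}(M)$ with $\Gamma^{(n)}(m)\in\overline{\mathbb{Q}}$. Since $\mathcal{L}(M)\subseteq\mathbb{Z}_{\geq1}$, Theorem 1 bounds this algebraic count by $n-1$. It is also trivially at most $\#\mathcal{L}(M)=M$. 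Hence the algebraic count is at most $\min\{n-1,M\}$. Dividing by $M$ gives
\[
\delta_{n}\left(M\right)\geq\dfrac{M-\min\left\{ n-1,M\right\} }{M}=1-\dfrac{\min\left\{ n-1,M\right\} }{M}=\beta_{n}\left(M\right),
\]
which is (7). The $\min$ is there only so that $\beta_{n}(M)\geq0$ when $M<n-1$. In that regime the bound is vacuous but still correct.

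For the asymptotic statement (8), note that for fixed $n$ and all $M\geq n-1$ we have $\min\{n-1,M\}=n-1$. Therefore $\beta_{n}(M)=1-(n-1)/M$ exactly for all sufficiently large $M$, and this tends to $1$ as $M\rightarrow\infty$. That gives (8), with equality holding eventually, which is stronger than $\asymp$.

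There is no real obstacle here. The only points needing care are applying Theorem 1 to the finite window $\mathcal{L}(M)$ and handling the edge case $M<n-1$, which the $\min$ takes care of.
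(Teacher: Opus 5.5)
Your proposal is correct and is essentially the paper's proof: Theorem 1 together with the trivial bound $M$ caps the number of algebraic values in $\mathcal{L}(M)$ at $\min\{n-1,M\}$, and you then take complements and divide by $M$, noting that $\beta_n(M)=1-(n-1)/M$ exactly once $M\ge n-1$. The paper writes this as an explicit two-case split ($M\le n-1$ versus $M>n-1$) rather than with a single $\min$, but the argument is the same.
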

\begin{proof}
\begin{singlespace}
\phantom{}
\end{singlespace}

\begin{singlespace}
\medskip{}
\noindent For values of $M>n-1$, Theorem 1 implies that the density of algebraic
$\Gamma^{\left(n\right)}\left(m\right)$ among $m\in\mathcal{L}\left(M\right)$
is (weakly) bounded above by $\left(n-1\right)/M$. On the other hand,
for $M\leq n-1$, Theorem 1 does not preclude all $\Gamma^{\left(n\right)}\left(m\right)$
from being algebraic, resulting in the (weak) upper bound of $M/M=1$.
To obtain lower bounds for the transcendental density, we simply take
the complements of these upper bounds, giving
\[
\delta_{n}\left(M\right)\geq\beta_{n}\left(M\right)=\begin{cases}
0/M, & M\leq n-1\\
1-\left(n-1\right)/M, & M>n-1
\end{cases},
\]
from which (7) and (8) immediately follow.
\end{singlespace}
\end{proof}
\begin{singlespace}
Comparing the asymptotic lower bounds of Corollary 1 above and Theorem
2 of Powers {[}3{]} -- given by $\beta_{n}\left(M\right)\asymp1-\left(n-1\right)/M$
and $\beta\left(N\right)\asymp1/\sqrt{N}$ (as indicated in (1)),
respectively -- it is easy to see that the former benefits greatly
from the strength of Theorem 1 above, converging to $1$ in the limit
as $M\rightarrow\infty$ (whereas the latter converges to $0$ in
the limit as $N\rightarrow\infty$). In other words, by analyzing
the problem from the perspective of the $m$ dimension rather than
the $n$ dimension, Corollary 1 is able to demonstrate a dramatically
greater density of transcendental values $\Gamma^{\left(n\right)}\left(m\right)$.
If we assume (as seems reasonable) that density characteristics are
homogeneous and independent over the $m$ dimension, then we may infer
that the weaker bound in Theorem 2 of Powers {[}3{]} can be greatly
improved.
\end{singlespace}

\subsection{Shifted Lattice Points}

\begin{singlespace}
\phantom{}

\medskip{}

\end{singlespace}

\begin{singlespace}
\noindent We now consider the transcendence of $\Gamma^{\left(n\right)}\left(\widetilde{m}\right)$
at rationally shifted lattice points $\widetilde{m}\in\mathcal{L}_{\kappa}^{\left(+\right)}\left(M\right)$
or $\widetilde{m}\in\mathcal{L}_{\kappa}^{\left(-\right)}\left(M\right)$
for fixed $n\in\mathbb{Z}_{\geq1}$ and $\kappa\in\mathcal{K}$. The
present exposition is arranged in a manner similar to Subsection 2.1,
with: (i) Theorem 2 providing an upper bound for the total number
of algebraic $\Gamma^{\left(n\right)}\left(\widetilde{m}\right)$
in §2.2.3; (ii) Corollary 2 giving the corresponding lower bound for
the density of transcendental $\Gamma^{\left(n\right)}\left(\widetilde{m}\right)$
among $\widetilde{m}\in\mathcal{L}_{\kappa}^{\left(+\right)}\left(M\right)$
or $\widetilde{m}\in\mathcal{L}_{\kappa}^{\left(-\right)}\left(M\right)$
in §2.2.4; and (iii) preliminary mathematical analysis presented in
§2.2.1 and §2.2.2.\footnote{Note that the split between §2.2.1 and §2.2.2 is based on the distinction
between shifted lattice points $\widetilde{m}\in\mathcal{L}_{\kappa}^{\left(+\right)}\left(\infty\right)$
and $\widetilde{m}\in\mathcal{L}_{\kappa}^{\left(-\right)}\left(\infty\right)$,
respectively, whereas §2.1.1 and §2.1.2 were organized around two
distinct components of the preliminary analysis.} Given that the derivations closely parallel those of the preceding
subsection, we will omit redundancies of exposition whenever possible
by careful referral to analogous portions of the earlier presentation.
\end{singlespace}

\begin{singlespace}
As noted in the Introduction, the only shift values $\kappa\in\left(0,1\right)\cap\mathbb{Q}$
for which it is known that $\Gamma\left(\kappa\right)$ is transcendental
are those in the set $\mathcal{K}^{*}=\left\{ 1/6,1/4,1/3,1/2,2/3,3/4,5/6\right\} $.
For $\kappa=1/2$, transcendence is immediately apparent because $\Gamma\left(1/2\right)=\sqrt{\pi}$,
whereas for $\kappa=1/4,1/3$ it follows from the work of Chudnovsky
{[}4{]}, who showed that both $\left\{ \Gamma\left(1/4\right),\pi\right\} $
and $\left\{ \Gamma\left(1/3\right),\pi\right\} $ are algebraically
independent pairs. The transcendence of $\Gamma\left(1/6\right)$
can be demonstrated by applying Legendre's duplication formula ($\Gamma\left(2z\right)=\left(2^{2z-1}/\sqrt{\pi}\right)\Gamma\left(z\right)\Gamma\left(z+1/2\right)$)
and Euler's reflection formula ($\Gamma\left(z\right)\Gamma\left(1-z\right)=\pi/\sin\left(\pi z\right)$)
to Chudnovsky's result for $\Gamma\left(1/3\right)$, and the complementary
cases of $\Gamma\left(2/3\right)$, $\Gamma\left(3/4\right)$, and
$\Gamma\left(5/6\right)$ then follow from Euler's reflection formula
in conjunction with both of Chudnovsky's results (see, e.g., Murty
and Weatherby {[}5{]}). Despite the rather small size of $\mathcal{K}^{*}$,
it is generally believed that $\Gamma\left(q\right)$ is transcendental
for all $q\in\left(0,1\right)\cap\mathbb{Q}$. Thus, our analysis
is broadly applicable conditional on this common view.\medskip{}

\end{singlespace}

\begin{singlespace}
\noindent\textbf{\emph{2.2.1\enskip{}The Case of $\boldsymbol{\widetilde{m}\in\mathcal{L}_{\kappa}^{\left(+\right)}\left(\infty\right)}$}}\medskip{}

\noindent For $m\in\mathbb{Z}_{\ge0}$, the functional equation gives
\[
\Gamma\left(m+\kappa+t\right)=\Gamma\left(\kappa+t\right)\prod_{u=0}^{m-1}\left(u+\kappa+t\right)
\]
\[
=\dfrac{\Gamma\left(m+\kappa\right)}{\Gamma\left(\kappa\right)}\Gamma\left(\kappa+t\right)\prod_{u=0}^{m-1}\left(1+\frac{t}{u+\kappa}\right)
\]
\begin{equation}
=\dfrac{\Gamma\left(m+\kappa\right)}{\Gamma\left(\kappa\right)}\Gamma\left(\kappa+t\right)\sum_{v=0}^{m}e_{v}\left(\dfrac{1}{\kappa},\frac{1}{1+\kappa},\dots,\frac{1}{m-1+\kappa}\right)t^{v},
\end{equation}
where ${\textstyle \prod_{u=0}^{m-1}\left(u+\kappa+t\right)\coloneqq1}$
for $m=0$ and $e_{v}\left(x_{1},x_{2},\dots,x_{k}\right)$ is defined
as in §2.1.1 (with the argument of $e_{v}\left(1/\kappa,1/\left(1+\kappa\right),\dots,1/\left(m-1+\kappa\right)\right)$
treated as $\emptyset$ for $m=0$). Differentiating (9) $n$ times
with respect to $t$ and then evaluating at $t=0$ gives
\[
\Gamma^{\left(n\right)}\left(m+\kappa\right)=\dfrac{\Gamma\left(m+\kappa\right)}{\Gamma\left(\kappa\right)}\sum_{\ell=0}^{n}\dfrac{n!}{\ell!}e_{n-\ell}\left(\dfrac{1}{\kappa},\frac{1}{1+\kappa},\dots,\frac{1}{m-1+\kappa}\right)\Gamma^{\left(\ell\right)}\left(\kappa\right)
\]
\begin{equation}
=\sum_{\ell=0}^{n}T_{n,\ell}^{\left(+\right)}\left(m+\kappa\right)\Gamma^{\left(\ell\right)}\left(\kappa\right),
\end{equation}
where
\[
T_{n,\ell}^{\left(+\right)}\left(m+\kappa\right)=\dfrac{\Gamma\left(m+\kappa\right)}{\Gamma\left(\kappa\right)}\frac{n!}{\ell!}e_{n-\ell}\left(\dfrac{1}{\kappa},\frac{1}{1+\kappa},\dots,\frac{1}{m-1+\kappa}\right)\in\mathbb{Q}
\]
and $\Gamma^{\left(0\right)}\left(\kappa\right)=\Gamma\left(\kappa\right)$.
\end{singlespace}

\begin{singlespace}
The identity in (10), which holds for all $n\in\mathbb{Z}_{\geq0}$
and $m\in\mathbb{Z}_{\geq0}$, allows us to construct a system of
$n+1\in\mathbb{Z}_{\geq2}$ equations of this form, based on a set
of arbitrary $\widetilde{m}$ values, $\mathcal{M}_{\kappa}^{\left(+\right)}=\left\{ m_{1}+\kappa,m_{2}+\kappa,\ldots,m_{n+1}+\kappa\right\} $
with $m_{r}\in\mathbb{Z}_{\geq0}$ for all $r\in\left\{ 1,2,\ldots,n+1\right\} $
and $m_{1}<m_{2}<\ldots<m_{n+1}$. Specifically,
\begin{equation}
\left[\begin{array}{c}
\Gamma^{\left(n\right)}\left(m_{1}+\kappa\right)\\
\Gamma^{\left(n\right)}\left(m_{2}+\kappa\right)\\
\vdots\\
\Gamma^{\left(n\right)}\left(m_{n+1}+\kappa\right)
\end{array}\right]=\mathbf{T}_{n+1}^{\left(+\right)}\left(\mathcal{M}_{\kappa}^{\left(+\right)}\right)\left[\begin{array}{c}
\Gamma^{\left(0\right)}\left(\kappa\right)\\
\Gamma^{\left(1\right)}\left(\kappa\right)\\
\vdots\\
\Gamma^{\left(n\right)}\left(\kappa\right)
\end{array}\right],
\end{equation}
with $\left(n+1\right)\times\left(n+1\right)$ coefficient matrix
\[
\mathbf{T}_{n+1}^{\left(+\right)}\left(\mathcal{M}_{\kappa}^{\left(+\right)}\right)=\left[\begin{array}{cccc}
T_{n,0}^{\left(+\right)}\left(m_{1}+\kappa\right) & T_{n,1}^{\left(+\right)}\left(m_{1}+\kappa\right) & \cdots & T_{n,n}^{\left(+\right)}\left(m_{1}+\kappa\right)\\
T_{n,0}^{\left(+\right)}\left(m_{2}+\kappa\right) & T_{n,1}^{\left(+\right)}\left(m_{2}+\kappa\right) & \cdots & T_{n,n}^{\left(+\right)}\left(m_{2}+\kappa\right)\\
\vdots & \vdots & \ddots & \vdots\\
T_{n,0}^{\left(+\right)}\left(m_{n+1}+\kappa\right) & T_{n,1}^{\left(+\right)}\left(m_{n+1}+\kappa\right) & \cdots & T_{n,n}^{\left(+\right)}\left(m_{n+1}+\kappa\right)
\end{array}\right].
\]

To show that $\mathbf{T}_{n+1}^{\left(+\right)}\left(\mathcal{M}_{\kappa}^{\left(+\right)}\right)$
is nonsingular, we proceed as in the proofs of Lemmas 1 and 2 after
replacing the dimension $n$ by $n+1$, the factor $\left(m_{r}-1\right)!$
by $\Gamma\left(m_{r}+\kappa\right)/\Gamma\left(\kappa\right)$, and
the argument vector $\left(1,1/2,\dots,1/\left(m_{r}-1\right)\right)$
by $\left(1/\kappa,1/\left(1+\kappa\right),\dots,1/\left(m_{r}-1+\kappa\right)\right)$.
Specifically, we multiply the $\left(\ell+1\right)^{\textrm{th}}$
column of $\mathbf{T}_{n+1}^{\left(+\right)}\left(\mathcal{M}_{\kappa}^{\left(+\right)}\right)$
by the non-zero rational factor $\ell!/n!$ and then reverse the column
order, transforming the matrix into a row-scaled version of the matrix
$\mathbf{E}_{n+1}\left(\mathcal{M}^{\prime}\right)$ (analogous to
$\mathbf{E}_{n}\left(\mathcal{M}^{\prime}\right)$ of Lemma 1) with
$m_{r}^{\prime}=m_{r}$ and $x_{s}=1/\left(s-1+\kappa\right)$ for
$s\in\left\{ 1,2,\ldots,m_{n+1}\right\} $. Since this transformation
does not affect nonsingularity, it follows from subsequent arguments
in the proofs of Lemmas 1 and 2 that
\[
\det\left(\mathbf{T}_{n+1}^{\left(+\right)}\left(\mathcal{M}_{\kappa}^{\left(+\right)}\right)\right)\neq0,
\]
so that $\mathbf{T}_{n+1}^{\left(+\right)}\left(\mathcal{M}_{\kappa}^{\left(+\right)}\right)$
is nonsingular. Given that all entries of the matrix lie in $\mathbb{Q}$,
we know that all entries of its inverse lie in $\mathbb{Q}$ as well.

\medskip{}

\end{singlespace}

\begin{singlespace}
\noindent\textbf{\emph{2.2.2\enskip{}The Case of $\boldsymbol{\widetilde{m}\in\mathcal{L}_{\kappa}^{\left(-\right)}\left(\infty\right)}$}}\medskip{}

\noindent For $m\in\mathbb{Z}_{\ge0}$, the functional equation gives
\[
\Gamma\left(-m+\kappa+t\right)=\dfrac{\Gamma\left(\kappa+t\right)}{\prod_{u=1}^{m}\left(-u+\kappa+t\right)}
\]
\[
=\dfrac{\Gamma\left(\kappa+t\right)}{\dfrac{\Gamma\left(\kappa\right)}{\Gamma\left(-m+\kappa\right)}\prod_{u=1}^{m}\left(1-\dfrac{t}{u-\kappa}\right)}
\]
\[
=\dfrac{\Gamma\left(-m+\kappa\right)}{\Gamma\left(\kappa\right)}\Gamma\left(\kappa+t\right)\prod_{u=1}^{m}\left(1-\dfrac{t}{u-\kappa}\right)^{-1}
\]
\begin{equation}
=\dfrac{\Gamma\left(-m+\kappa\right)}{\Gamma\left(\kappa\right)}\Gamma\left(\kappa+t\right)\sum_{v=0}^{\infty}h_{v}\left(\dfrac{1}{1-\kappa},\frac{1}{2-\kappa},\dots,\frac{1}{m-\kappa}\right)t^{v},
\end{equation}
where ${\textstyle \prod_{u=1}^{m}\left(-u+\kappa+t\right)\coloneqq1}$
for $m=0$ and $h_{v}\left(x_{1},x_{2},\dots,x_{k}\right)$ denotes
the $v^{\textrm{th}}$ complete homogeneous symmetric polynomial with
$h_{0}\left(x_{1},x_{2},\dots,x_{k}\right)\coloneqq1$, $h_{0}\left(\emptyset\right)\coloneqq1$,
$h_{v}\left(x_{1},x_{2},\dots,x_{k}\right)\coloneqq0$ for $v<0$,
and $h_{v}\left(\emptyset\right)\coloneqq0$ for $v>0$ (in particular,
the argument of $h_{v}\left(1/\left(1-\kappa\right),1/\left(2-\kappa\right),\dots,1/\left(m-\kappa\right)\right)$
is treated as $\emptyset$ for $m=0$). Differentiating (12) $n$
times with respect to $t$ at $t=0$ gives
\[
\Gamma^{\left(n\right)}\left(-m+\kappa\right)=\dfrac{\Gamma\left(-m+\kappa\right)}{\Gamma\left(\kappa\right)}\sum_{\ell=0}^{n}\dfrac{n!}{\ell!}h_{n-\ell}\left(\dfrac{1}{1-\kappa},\frac{1}{2-\kappa},\dots,\frac{1}{m-\kappa}\right)\Gamma^{\left(\ell\right)}\left(\kappa\right)
\]
\begin{equation}
=\sum_{\ell=0}^{n}T_{n,\ell}^{\left(-\right)}\left(-m+\kappa\right)\Gamma^{\left(\ell\right)}\left(\kappa\right),
\end{equation}
where
\[
T_{n,\ell}^{\left(-\right)}\left(-m+\kappa\right)=\dfrac{\Gamma\left(-m+\kappa\right)}{\Gamma\left(\kappa\right)}\frac{n!}{\ell!}h_{n-\ell}\left(\dfrac{1}{1-\kappa},\frac{1}{2-\kappa},\dots,\frac{1}{m-\kappa}\right)\in\mathbb{Q}.
\]

\end{singlespace}

The identity in (13), which holds for all $n\in\mathbb{Z}_{\geq0}$
and $m\in\mathbb{Z}_{\geq0}$, allows us to construct a system of
$n+1\in\mathbb{Z}_{\geq2}$ equations of the same form, based on a
set of arbitrary $\widetilde{m}$ values, $\mathcal{M}_{\kappa}^{\left(-\right)}=\left\{ -m_{1}+\kappa,-m_{2}+\kappa,\ldots,-m_{n+1}+\kappa\right\} $
with $m_{r}\in\mathbb{Z}_{\geq0}$ for all $r\in\left\{ 1,2,\ldots,n+1\right\} $
and $m_{1}<m_{2}<\ldots<m_{n+1}$. Specifically,
\begin{equation}
\left[\begin{array}{c}
\Gamma^{\left(n\right)}\left(-m_{1}+\kappa\right)\\
\Gamma^{\left(n\right)}\left(-m_{2}+\kappa\right)\\
\vdots\\
\Gamma^{\left(n\right)}\left(-m_{n+1}+\kappa\right)
\end{array}\right]=\mathbf{T}_{n+1}^{\left(-\right)}\left(\mathcal{M}_{\kappa}^{\left(-\right)}\right)\left[\begin{array}{c}
\Gamma^{\left(0\right)}\left(\kappa\right)\\
\Gamma^{\left(1\right)}\left(\kappa\right)\\
\vdots\\
\Gamma^{\left(n\right)}\left(\kappa\right)
\end{array}\right],
\end{equation}
with $\left(n+1\right)\times\left(n+1\right)$ coefficient matrix
\[
\mathbf{T}_{n+1}^{\left(-\right)}\left(\mathcal{M}_{\kappa}^{\left(-\right)}\right)=\left[\begin{array}{cccc}
T_{n,0}^{\left(-\right)}\left(-m_{1}+\kappa\right) & T_{n,1}^{\left(-\right)}\left(-m_{1}+\kappa\right) & \cdots & T_{n,n}^{\left(-\right)}\left(-m_{1}+\kappa\right)\\
T_{n,0}^{\left(-\right)}\left(-m_{2}+\kappa\right) & T_{n,1}^{\left(-\right)}\left(-m_{2}+\kappa\right) & \cdots & T_{n,n}^{\left(-\right)}\left(-m_{2}+\kappa\right)\\
\vdots & \vdots & \ddots & \vdots\\
T_{n,0}^{\left(-\right)}\left(-m_{n+1}+\kappa\right) & T_{n,1}^{\left(-\right)}\left(-m_{n+1}+\kappa\right) & \cdots & T_{n,n}^{\left(-\right)}\left(-m_{n+1}+\kappa\right)
\end{array}\right].
\]

\begin{singlespace}
As with $\mathbf{T}_{n+1}^{\left(+\right)}\left(\mathcal{M}_{\kappa}^{\left(+\right)}\right)$
in §2.2.1, it can be shown that $\mathbf{T}_{n+1}^{\left(-\right)}\left(\mathcal{M}_{\kappa}^{\left(-\right)}\right)$
is nonsingular via arguments parallel to those underlying Lemmas 1
and 2. However, in this case we make a few additional adjustments
to account for the use of complete homogeneous symmetric polynomials
($h_{v}\left(\cdot\right)$) rather than elementary symmetric polynomials
($e_{v}\left(\cdot\right)$). 

After replacing the dimension $n$ by $n+1$, the factor $\left(m_{r}-1\right)!$
by $\Gamma\left(-m_{r}+\kappa\right)/\Gamma\left(\kappa\right)$,
and the argument vector $\left(1,1/2,\dots,1/\left(m_{r}-1\right)\right)$
by $\left(1/\left(1-\kappa\right),1/\left(2-\kappa\right),\dots,1/\left(m_{r}-\kappa\right)\right)$,
we multiply the $\left(\ell+1\right)^{\textrm{th}}$ column of $\mathbf{T}_{n+1}^{\left(-\right)}\left(\mathcal{M}_{\kappa}^{\left(-\right)}\right)$
by the non-zero rational factor $\ell!/n!$ and then reverse the column
order. This transforms $\mathbf{T}_{n+1}^{\left(-\right)}\left(\mathcal{M}_{\kappa}^{\left(-\right)}\right)$
into a row-scaled version of the $\left(n+1\right)\times\left(n+1\right)$
matrix 
\[
\mathbf{H}_{n+1}\left(\mathcal{M}^{\prime}\right)=\left[\begin{array}{cccc}
1 & h_{1}\left(\boldsymbol{x}^{\left(m_{1}^{\prime}\right)}\right) & \cdots & h_{n}\left(\boldsymbol{x}^{\left(m_{1}^{\prime}\right)}\right)\\
1 & h_{1}\left(\boldsymbol{x}^{\left(m_{2}^{\prime}\right)}\right) & \cdots & h_{n}\left(\boldsymbol{x}^{\left(m_{2}^{\prime}\right)}\right)\\
\vdots & \vdots & \ddots & \vdots\\
1 & h_{1}\left(\boldsymbol{x}^{\left(m_{n+1}^{\prime}\right)}\right) & \cdots & h_{n}\left(\boldsymbol{x}^{\left(m_{n+1}^{\prime}\right)}\right)
\end{array}\right]
\]
(analogous to $\mathbf{E}_{n}\left(\mathcal{M}^{\prime}\right)$ of
Lemma 1), with $m_{r}^{\prime}=m_{r}$, $\boldsymbol{x}^{\left(m_{r}^{\prime}\right)}=\left[x_{1},x_{2},\ldots,x_{m_{r}^{\prime}}\right]$,
and $x_{s}=1/\left(s-\kappa\right)$ for $s\in\left\{ 1,2,\ldots,m_{n+1}\right\} $.
Since this transformation does not affect nonsingularity, it then
suffices to show that
\begin{equation}
\det\left(\mathbf{H}_{n+1}\left(\mathcal{M}^{\prime}\right)\right)>0.
\end{equation}

\end{singlespace}

\begin{singlespace}
To confirm that this inequality holds, we repeat the induction argument
used in the proof of Lemma 1, beginning with $n=0$, for which
\[
\mathbf{H}_{1}\left(\mathcal{M}^{\prime}\right)=\left[1\right]\Longrightarrow\det\left(\mathbf{H}_{1}\left(\mathcal{M}^{\prime}\right)\right)=1.
\]
Assuming (15) is true for arbitrary $n$, we proceed to construct
a row-difference matrix analogous to that in Lemma 1, and then expand
by minors to obtain a new $n\times n$ matrix $\mathbf{D}_{n}\left(\mathcal{M}^{\prime}\right)$.
Replacing (5) by
\[
h_{c}\left(\boldsymbol{x}^{\left(m_{r}^{\prime}\right)}\right)=\begin{cases}
h_{c}\left(\boldsymbol{x}^{\left(m_{r}^{\prime}-1\right)}\right)+x_{m_{r}^{\prime}}h_{c-1}\left(\boldsymbol{x}^{\left(m_{r}^{\prime}\right)}\right), & m_{r}^{\prime}\geq1\\
h_{c}\left(\emptyset\right), & m_{r}^{\prime}=0
\end{cases}
\]
to derive
\[
h_{c}\left(\boldsymbol{x}^{\left(j\right)}\right)-h_{c}\left(\boldsymbol{x}^{\left(j-1\right)}\right)=x_{j}h_{c-1}\left(\boldsymbol{x}^{\left(j\right)}\right),
\]
we find that the ($r,c$) entry of $\mathbf{D}_{n}\left(\mathcal{M}^{\prime}\right)$
is given by
\[
D_{r,c}=\sum_{j=m_{r}^{\prime}+1}^{m_{r+1}^{\prime}}x_{j}h_{c-1}\left(\boldsymbol{x}^{\left(j\right)}\right).
\]
This allows us to write $\mathbf{D}_{n}\left(\mathcal{M}^{\prime}\right)=\mathbf{A}\mathbf{B}$,
where the new $n\times m_{n+1}^{\prime}$ matrix $\mathbf{A}$ is
defined by its generic ($r,j$) entry
\[
A_{r,j}=\begin{cases}
x_{j}, & m_{r}^{\prime}<j\leq m_{r+1}^{\prime}\\
0, & \text{otherwise}
\end{cases},
\]
the new $m_{n+1}^{\prime}\times n$ matrix $\mathbf{B}$ is defined
by its generic ($j,c$) entry
\[
B_{j,c}=h_{c-1}\left(\boldsymbol{x}^{\left(j\right)}\right),
\]
and the Cauchy-Binet formula gives
\[
\det\left(\mathbf{D}_{n}\left(\mathcal{M}^{\prime}\right)\right)=\sum_{\substack{\mathcal{S}\subseteq\left\{ 1,2,\ldots,m_{n+1}^{\prime}\right\} ,\\
|\mathcal{S}|=n
}
}\det\left(\mathbf{A}_{*,\mathcal{S}}\right)\det\left(\mathbf{B}_{\mathcal{S},*}\right).
\]

\end{singlespace}

As in the proof of Lemma 1, the supports of the rows of $\mathbf{A}$
are the disjoint intervals $\left(m_{r}^{\prime},m_{r+1}^{\prime}\right]$.
This means that $\det\left(\mathbf{A}_{*,\mathcal{S}}\right)=0$ unless
$\mathcal{S}=\left\{ s_{1}<s_{2}<\cdots<s_{n}\right\} $ with $m_{r}^{\prime}<s_{r}\leq m_{r+1}^{\prime}$
for all $r$, in which case $\mathbf{A}_{*,\mathcal{S}}$ is diagonal
with diagonal entries $x_{s_{1}},x_{s_{2}},\dots,x_{s_{n}}$, implying
$\det\left(\mathbf{A}_{*,\mathcal{S}}\right)=\prod_{r=1}^{n}x_{s_{r}}>0$.
Moreover, for any such $\mathcal{S}$, the matrix $\mathbf{B}_{\mathcal{S},*}$
has entries 
\[
B_{r^{\prime},c^{\prime}}^{\left(\mathcal{S},*\right)}=h_{c^{\prime}-1}\left(\boldsymbol{x}^{\left(s_{r^{\prime}}\right)}\right),
\]
with $1\le s_{1}<s_{2}<\cdots<s_{n}$, giving $\mathbf{B}_{\mathcal{S},*}$
the same form as $\mathbf{H}_{n}\left(\mathcal{M}^{\prime}\right)$,
so that $\det\left(\mathbf{B}_{\mathcal{S},*}\right)>0$ by the induction
hypothesis. Applying subsequent arguments from the proofs of Lemmas
1 and 2, we find that
\[
\det\left(\mathbf{D}_{n}\left(\mathcal{M}^{\prime}\right)\right)>0
\]
\[
\Longrightarrow\det\left(\mathbf{H}_{n+1}\left(\mathcal{M}^{\prime}\right)\right)>0
\]
\[
\Longrightarrow\det\left(\mathbf{T}_{n+1}^{\left(-\right)}\left(\mathcal{M}_{\kappa}^{\left(-\right)}\right)\right)\neq0,
\]
and thus that $\mathbf{T}_{n+1}^{\left(-\right)}\left(\mathcal{M}_{\kappa}^{\left(-\right)}\right)$
is nonsingular. Since all entries of $\mathbf{T}_{n+1}^{\left(-\right)}\left(\mathcal{M}_{\kappa}^{\left(-\right)}\right)$
lie in $\mathbb{Q}$, it follows that all entries of its inverse lie
in $\mathbb{Q}$ as well.

\begin{singlespace}
\medskip{}

\end{singlespace}

\begin{singlespace}
\noindent\textbf{\emph{2.2.3\enskip{}Upper Bound for Number of Algebraic
Values}}\medskip{}

\end{singlespace}

\begin{singlespace}
\noindent Given that both coefficient matrices $\mathbf{T}_{n+1}^{\left(+\right)}\left(\mathcal{M}_{\kappa}^{\left(+\right)}\right)$
and $\mathbf{T}_{n+1}^{\left(-\right)}\left(\mathcal{M}_{\kappa}^{\left(-\right)}\right)$
are invertible, we now use the linear systems in (11) and (14) (for
$\widetilde{m}\in\mathcal{L}_{\kappa}^{\left(+\right)}\left(\infty\right)$
and $\widetilde{m}\in\mathcal{L}_{\kappa}^{\left(-\right)}\left(\infty\right)$,
respectively) to provide upper bounds for the number of algebraic
values $\Gamma^{\left(n\right)}\left(\widetilde{m}\right)$ for fixed
$n\in\mathbb{Z}_{\geq1}$.
\end{singlespace}
\begin{thm}
For $n\in\mathbb{Z}_{\geq1}$ and $\kappa\in\mathcal{K}$:

\begin{singlespace}
\noindent
\[
\mathit{(i)}\;\#\left(\widetilde{m}\in\mathcal{L}_{\kappa}^{\left(+\right)}\left(\infty\right):\Gamma^{\left(n\right)}\left(\widetilde{m}\right)\textrm{ is algebraic}\right)\le n
\]
 and

\noindent
\[
\mathit{(ii)}\;\#\left(\widetilde{m}\in\mathcal{L}_{\kappa}^{\left(-\right)}\left(\infty\right):\Gamma^{\left(n\right)}\left(\widetilde{m}\right)\textrm{ is algebraic}\right)\le n.
\]
\pagebreak{}
\end{singlespace}
\end{thm}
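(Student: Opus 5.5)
The plan is to mirror the proof of Theorem 1, using the linear systems (11) and (14) together with the nonsingularity of $\mathbf{T}_{n+1}^{\left(+\right)}\left(\mathcal{M}_{\kappa}^{\left(+\right)}\right)$ and $\mathbf{T}_{n+1}^{\left(-\right)}\left(\mathcal{M}_{\kappa}^{\left(-\right)}\right)$ established in §2.2.1 and §2.2.2. For part (i), fix $n\in\mathbb{Z}_{\geq1}$ and $\kappa\in\mathcal{K}$. Take an arbitrary set $\mathcal{M}_{\kappa}^{\left(+\right)}=\left\{ m_{1}+\kappa,\ldots,m_{n+1}+\kappa\right\}$ of $n+1$ distinct points with $0\leq m_{1}<\cdots<m_{n+1}$. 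Suppose, for contradiction, that $\Gamma^{\left(n\right)}\left(m_{r}+\kappa\right)\in\overline{\mathbb{Q}}$ for every $r$.

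Inverting (11) then expresses the vector $\left[\Gamma^{\left(0\right)}\left(\kappa\right),\Gamma^{\left(1\right)}\left(\kappa\right),\ldots,\Gamma^{\left(n\right)}\left(\kappa\right)\right]^{\mathrm{T}}$ as a matrix with rational entries applied to a vector of algebraic numbers. Hence every $\Gamma^{\left(\ell\right)}\left(\kappa\right)$ with $0\leq\ell\leq n$ is algebraic. In particular its first component, $\Gamma\left(\kappa\right)$, is algebraic, which contradicts $\kappa\in\mathcal{K}$.

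This argument is simpler than the one for Theorem 1. System (11) is homogeneous and includes the $\ell=0$ column, so no auxiliary fact such as the transcendence of $\pi^{2}$ is needed, and the bound $n$ (rather than $n-1$) comes simply from the size $n+1$ of the system. Since the set of $n+1$ points was arbitrary, no $n+1$ points of $\mathcal{L}_{\kappa}^{\left(+\right)}\left(\infty\right)$ can all have algebraic $\Gamma^{\left(n\right)}$, so at most $n$ do. Part (ii) is identical, using (14), the set $\mathcal{M}_{\kappa}^{\left(-\right)}$, and the nonsingularity of $\mathbf{T}_{n+1}^{\left(-\right)}\left(\mathcal{M}_{\kappa}^{\left(-\right)}\right)$ from (15).

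The one step that needs care is the claim, used without comment in §2.2.1–§2.2.2, that the entries $T_{n,\ell}^{\left(\pm\right)}$ are rational. This holds because $\Gamma\left(m+\kappa\right)/\Gamma\left(\kappa\right)=\prod_{u=0}^{m-1}\left(u+\kappa\right)\in\mathbb{Q}$ and $\Gamma\left(-m+\kappa\right)/\Gamma\left(\kappa\right)=\prod_{u=1}^{m}\left(\kappa-u\right)^{-1}\in\mathbb{Q}$, both nonzero since $\kappa\notin\mathbb{Z}$. The symmetric polynomials are evaluated at rationals, so they are rational as well.

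In fact only algebraicity of the entries is needed: the inverse of a nonsingular matrix over $\overline{\mathbb{Q}}$ again has entries in $\overline{\mathbb{Q}}$, which is a field. This robustness is worth noting because the row-scaling factors must be nonzero for the reduction to $\mathbf{E}_{n+1}$ and $\mathbf{H}_{n+1}$ to preserve nonsingularity, and they are. Beyond this bookkeeping there is no real obstacle, since the nonsingularity has already been established in §2.2.1 and §2.2.2.
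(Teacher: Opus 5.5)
Your proposal is correct and follows the paper's proof essentially step for step. You assume $n+1$ shifted points all give algebraic $\Gamma^{(n)}$ values, invert the nonsingular rational system (11) (respectively (14)) to force $\Gamma(\kappa)\in\overline{\mathbb{Q}}$, contradict $\kappa\in\mathcal{K}$, and conclude from the arbitrariness of the point set. Your explicit check that the entries $T_{n,\ell}^{(\pm)}$ are rational, via $\Gamma(m+\kappa)/\Gamma(\kappa)=\prod_{u=0}^{m-1}(u+\kappa)$ and its negative-side analogue, fills in a detail the paper only asserts.
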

\begin{proof}
\begin{singlespace}
\phantom{}
\end{singlespace}

\begin{singlespace}
\medskip{}
\noindent (i) Fixing $n$ and $\kappa$, consider a set $\mathcal{M}_{\kappa}^{\left(+\right)}=\left\{ m_{1}+\kappa,m_{2}+\kappa,\ldots,m_{n+1}+\kappa\right\} $
with $m_{r}\in\mathbb{Z}_{\geq0}$ for all $r\in\left\{ 1,2,\ldots,n+1\right\} $
and $m_{1}<m_{2}<\ldots<m_{n+1}$, and assume (for purposes of contradiction)
that $\Gamma^{\left(n\right)}\left(m_{r}+\kappa\right)\in\overline{\mathbb{Q}}$
for all $r\in\left\{ 1,2,\ldots,n+1\right\} $. Knowing that $\mathbf{T}_{n+1}^{\left(+\right)}\left(\mathcal{M}_{\kappa}^{\left(+\right)}\right)$
is invertible, we can solve for the column vector $\left[\Gamma^{\left(0\right)}\left(\kappa\right),\Gamma^{\left(1\right)}\left(\kappa\right),\ldots,\Gamma^{\left(n\right)}\left(\kappa\right)\right]^{\mathrm{T}}$
in (11) as
\begin{equation}
\left[\begin{array}{c}
\Gamma^{\left(0\right)}\left(\kappa\right)\\
\Gamma^{\left(1\right)}\left(\kappa\right)\\
\vdots\\
\Gamma^{\left(n\right)}\left(\kappa\right)
\end{array}\right]=\left(\mathbf{T}_{n+1}^{\left(+\right)}\left(\mathcal{M}_{\kappa}^{\left(+\right)}\right)\right)^{-1}\left[\begin{array}{c}
\Gamma^{\left(n\right)}\left(m_{1}+\kappa\right)\\
\Gamma^{\left(n\right)}\left(m_{2}+\kappa\right)\\
\vdots\\
\Gamma^{\left(n\right)}\left(m_{n+1}+\kappa\right)
\end{array}\right],
\end{equation}
where all elements of the column vector on the right-hand side of
(16) are algebraic under the contradiction hypothesis.
\end{singlespace}

Since the inverse matrix $\left(\mathbf{T}_{n+1}^{\left(+\right)}\left(\mathcal{M}_{\kappa}^{\left(+\right)}\right)\right)^{-1}$
contains only rational entries, it follows that all elements of\linebreak{}
$\left[\Gamma^{\left(0\right)}\left(\kappa\right),\Gamma^{\left(1\right)}\left(\kappa\right),\ldots,\Gamma^{\left(n\right)}\left(\kappa\right)\right]^{\mathrm{T}}$
must be algebraic. However, we know that $\Gamma^{\left(0\right)}\left(\kappa\right)$
is transcendental, forcing a contradiction that precludes $\Gamma^{\left(n\right)}\left(m_{r}+\kappa\right)\in\overline{\mathbb{Q}}$
for all $r\in\left\{ 1,2,\ldots,n+1\right\} $. Therefore, at most
$n$ of the elements in $\left[\Gamma^{\left(n\right)}\left(m_{1}+\kappa\right),\Gamma^{\left(n\right)}\left(m_{2}+\kappa\right),\ldots,\Gamma^{\left(n\right)}\left(m_{n+1}+\kappa\right)\right]^{\mathrm{T}}$
can be algebraic. Furthermore, given that $\mathcal{M}_{\kappa}^{\left(+\right)}$
was constructed of arbitrary $m\in\mathbb{Z}_{\geq0}$, it is impossible
to assemble any set of algebraic $\Gamma^{\left(n\right)}\left(m+\kappa\right)$
with more than $n$ distinct $m$.

\begin{singlespace}
\noindent (ii) The proof is entirely analogous to that of part (i),
with $\mathcal{M}_{\kappa}^{\left(+\right)}$ replaced by $\mathcal{M}_{\kappa}^{\left(-\right)}=$\linebreak{}
$\left\{ -m_{1}+\kappa,-m_{2}+\kappa,\ldots,-m_{n+1}+\kappa\right\} $
(where $m_{r}\in\mathbb{Z}_{\geq0}$ for all $r\in\left\{ 1,2,\ldots,n+1\right\} $
and $m_{1}<m_{2}<\ldots<m_{n+1}$) and $\left(\mathbf{T}_{n+1}^{\left(+\right)}\left(\mathcal{M}_{\kappa}^{\left(+\right)}\right)\right)^{-1}$
replaced by $\left(\mathbf{T}_{n+1}^{\left(-\right)}\left(\mathcal{M}_{\kappa}^{\left(-\right)}\right)\right)^{-1}$.
\end{singlespace}
\end{proof}
\begin{singlespace}
This result, like Theorem 1, is rather strong because it demonstrates
the transcendence of $\Gamma^{\left(n\right)}\left(\widetilde{m}\right)$
for ``all but a finite number of $\widetilde{m}$'' for each $n\in\mathbb{Z}_{\geq1}$.
In particular, for the lowest value of $n$ addressed by Theorem 2
(i.e., $n=1$), we know that $\Gamma^{\left(1\right)}\left(\widetilde{m}\right)$
is transcendental for ``all but at most one $\widetilde{m}$'' for
each one-sided shifted lattice (i.e., $\widetilde{m}\geq\kappa$ or
$\widetilde{m}\leq\kappa$).
\end{singlespace}

\begin{singlespace}
\medskip{}

\end{singlespace}

\begin{singlespace}
\noindent\textbf{\emph{2.2.4\enskip{}Lower Bound for Transcendental
Densities}}\medskip{}

\end{singlespace}

\begin{singlespace}
\noindent For fixed $n\in\mathbb{Z}_{\geq1}$ and $\kappa\in\mathcal{K}$,
let
\[
\delta_{n,\kappa}^{\left(+\right)}\left(M\right)=\dfrac{\#\left\{ \widetilde{m}\in\mathcal{L}_{\kappa}^{\left(+\right)}\left(M\right):\Gamma^{\left(n\right)}\left(\widetilde{m}\right)\textrm{ is transcendental}\right\} }{M+1},\quad M\in\mathbb{Z}_{\geq0}
\]
and
\[
\delta_{n,\kappa}^{\left(-\right)}\left(M\right)=\dfrac{\#\left\{ \widetilde{m}\in\mathcal{L}_{\kappa}^{\left(-\right)}\left(M\right):\Gamma^{\left(n\right)}\left(\widetilde{m}\right)\textrm{ is transcendental}\right\} }{M+1},\quad M\in\mathbb{Z}_{\geq0}
\]
denote the densities of the sets of transcendental $\Gamma^{\left(n\right)}\left(\widetilde{m}\right)$
among $\widetilde{m}\in\mathcal{L}_{\kappa}^{\left(+\right)}\left(M\right)$
and $\widetilde{m}\in\mathcal{L}_{\kappa}^{\left(-\right)}\left(M\right)$,
respectively. The following corollary to Theorem 2 gives a lower bound
for these densities.
\end{singlespace}
\begin{cor}
For $n\in\mathbb{Z}_{\geq1}$ and $\kappa\in\mathcal{K}$:
\begin{equation}
\mathit{(i)}\;\delta_{n,\kappa}^{\left(+\right)}\left(M\right)\geq\widetilde{\beta}_{n}\left(M\right)=1-\dfrac{\min\left\{ n,M+1\right\} }{M+1}
\end{equation}
and
\begin{equation}
\mathit{(ii)}\;\delta_{n,\kappa}^{\left(-\right)}\left(M\right)\geq\widetilde{\beta}_{n}\left(M\right)=1-\dfrac{\min\left\{ n,M+1\right\} }{M+1},
\end{equation}
where
\[
\widetilde{\beta}_{n}\left(M\right)\asymp1-\dfrac{n}{M}\;as\;M\rightarrow\infty.
\]
\pagebreak{} 
\end{cor}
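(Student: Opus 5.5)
The plan mirrors the proof of Corollary 1, using Theorem 2 in place of Theorem 1. Fix $n\in\mathbb{Z}_{\geq1}$ and $\kappa\in\mathcal{K}$. I treat part (i) in detail; part (ii) follows by the same argument with $\mathcal{L}_{\kappa}^{\left(+\right)}$ replaced by $\mathcal{L}_{\kappa}^{\left(-\right)}$ and part (i) of Theorem 2 replaced by part (ii).

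First, note that $\mathcal{L}_{\kappa}^{\left(+\right)}\left(M\right)=\left\{ \kappa,1+\kappa,\ldots,M+\kappa\right\} $ has exactly $M+1$ distinct elements. It is also a subset of $\mathcal{L}_{\kappa}^{\left(+\right)}\left(\infty\right)$. Let $a\left(M\right)$ be the number of $\widetilde{m}\in\mathcal{L}_{\kappa}^{\left(+\right)}\left(M\right)$ with $\Gamma^{\left(n\right)}\left(\widetilde{m}\right)$ algebraic.

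Theorem 2(i) bounds the algebraic values on the whole one-sided lattice by $n$, so $a\left(M\right)\leq n$. Trivially, $a\left(M\right)\leq M+1$ as well. Hence
\[
a\left(M\right)\leq\min\left\{ n,M+1\right\} .
\]
Every element of $\mathcal{L}_{\kappa}^{\left(+\right)}\left(M\right)$ gives a value $\Gamma^{\left(n\right)}\left(\widetilde{m}\right)$ that is either algebraic or transcendental. The transcendental count is therefore $(M+1)-a\left(M\right)$, and dividing by $M+1$ gives
\[
\delta_{n,\kappa}^{\left(+\right)}\left(M\right)=1-\frac{a\left(M\right)}{M+1}\geq1-\frac{\min\left\{ n,M+1\right\} }{M+1}=\widetilde{\beta}_{n}\left(M\right).
\]
As in Corollary 1, this splits into two cases: $\widetilde{\beta}_{n}\left(M\right)=0$ for $M+1\leq n$, and $\widetilde{\beta}_{n}\left(M\right)=1-n/\left(M+1\right)$ for $M+1>n$.

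For the asymptotic statement, take $M\geq n$, so that $\widetilde{\beta}_{n}\left(M\right)=1-n/\left(M+1\right)$. Since $n/\left(M+1\right)=n/M\cdot\left(1+1/M\right)^{-1}$, we get $1-n/\left(M+1\right)=1-n/M+O\left(M^{-2}\right)$. Thus $\widetilde{\beta}_{n}\left(M\right)\asymp1-n/M$, and both tend to $1$.

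There is no real obstacle here; the only care needed is the off-by-one bookkeeping. The denominator is $M+1$ rather than $M$, because the shifted lattice starts at $\kappa$ (the index $m=0$) rather than at $1$. Accordingly, the threshold between the two cases is $M+1$ versus $n$, not $n-1$ versus $M$.
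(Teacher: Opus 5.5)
Your proposal is correct and follows essentially the same route as the paper: bound the number of algebraic values in $\mathcal{L}_{\kappa}^{(\pm)}(M)$ by $\min\{n,M+1\}$ via Theorem 2, then take complements and divide by $M+1$. Your explicit $1-n/(M+1)=1-n/M+O(M^{-2})$ computation fills in the asymptotic step that the paper leaves implicit.
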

\begin{proof}
\begin{singlespace}
\phantom{}
\end{singlespace}

\begin{singlespace}
\medskip{}
\noindent (i), (ii) For values of $M+1>n$, Theorem 2 implies that the density
of algebraic $\Gamma^{\left(n\right)}\left(\widetilde{m}\right)$
among either $\widetilde{m}\in\mathcal{L}_{\kappa}^{\left(+\right)}\left(M\right)$
or $\widetilde{m}\in\mathcal{L}_{\kappa}^{\left(-\right)}\left(M\right)$
is (weakly) bounded above by $n/\left(M+1\right)$. On the other hand,
for $M+1\leq n$, Theorem 2 does not preclude all $\Gamma^{\left(n\right)}\left(\widetilde{m}\right)$
from being algebraic, resulting in the (weak) upper bound of $\left(M+1\right)/\left(M+1\right)=1$.
To obtain a lower bound for the transcendental density, we take the
complements of these upper bounds, giving
\[
\delta_{n,\kappa}^{\left(+\right)}\left(M\right)\geq\widetilde{\beta}_{n}\left(M\right)=\begin{cases}
0/\left(M+1\right), & M+1\leq n\\
1-n/\left(M+1\right), & M+1>n
\end{cases}
\]
and
\[
\delta_{n,\kappa}^{\left(-\right)}\left(M\right)\geq\widetilde{\beta}_{n}\left(M\right)=\begin{cases}
0/\left(M+1\right), & M+1\leq n\\
1-n/\left(M+1\right), & M+1>n
\end{cases},
\]
from which (17) and (18) follow.
\end{singlespace}
\end{proof}
\medskip{}

\section{Lower Bounds with Variable $n$, Variable $m$}

\begin{singlespace}
\noindent In the present section, we extend the analysis of Section
2 to evaluate the bivariate densities of both transcendental $\Gamma^{\left(n\right)}\left(m\right)$
among $\left(n,m\right)\in\left\{ 2,3,\ldots,N\right\} \times\mathcal{L}\left(M\right)$
and transcendental $\Gamma^{\left(n\right)}\left(\widetilde{m}\right)$
among $\left(n,\widetilde{m}\right)\in\left\{ 1,2,\ldots,N\right\} \times\mathcal{L}_{\kappa}^{\left(+\right)}\left(M\right)$
and $\left(n,\widetilde{m}\right)\in\left\{ 1,2,\ldots,N\right\} \times\mathcal{L}_{\kappa}^{\left(-\right)}\left(M\right)$.
The separate cases of positive lattice points and rationally shifted
lattice points are addressed in Subsections 3.1 and 3.2, respectively.
\end{singlespace}

\subsection{Positive Lattice Points}

\begin{singlespace}
\phantom{}

\medskip{}

\end{singlespace}

\begin{singlespace}
\noindent For variable $n\in\mathbb{Z}_{\geq2}$ and positive lattice
points $m\in\mathcal{L}\left(\infty\right)$, define the bivariate
density
\[
\delta\left(N,M\right)=\dfrac{\#\left(\left(n,m\right)\in\left\{ 2,3,\ldots,N\right\} \times\mathcal{L}\left(M\right):\Gamma^{\left(n\right)}\left(m\right)\textrm{ is transcendental}\right)}{\left(N-1\right)M}.
\]
The following result provides a lower bound for $\delta\left(N,M\right)$
by leveraging Theorem 1 to construct an upper bound for the number
of algebraic values among pairs $\left(n,m\right)\in\left\{ 2,3,\ldots,N\right\} \times\mathcal{L}\left(M\right)$.
\end{singlespace}
\begin{thm}
For $N\in\mathbb{Z}_{\geq2}$ and $M\in\mathbb{Z}_{\geq1}$,
\[
\delta\left(N,M\right)\ge\beta\left(N,M\right)=\begin{cases}
\left(M-1\right)/\left[2\left(N-1\right)\right], & M\leq N-1\\
1-N/\left(2M\right), & M>N-1
\end{cases},
\]
where
\[
\beta\left(N,M\right)\asymp\begin{cases}
M/\left(2N\right), & M/N\leq1\\
1-N/\left(2M\right), & M/N>1
\end{cases}
\]
as $N,M\rightarrow\infty$.
\end{thm}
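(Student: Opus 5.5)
The plan is to bound the number of algebraic pairs column by column in $n$, using Theorem 1, and then take complements, as in Corollary 1. For each fixed $n\in\{2,3,\ldots,N\}$, Theorem 1 says at most $n-1$ values $m\in\mathbb{Z}_{\geq1}$ give an algebraic $\Gamma^{(n)}(m)$. Trivially there are also at most $M$ such $m$ in $\mathcal{L}(M)$. So the number of algebraic $\Gamma^{(n)}(m)$ with $m\in\mathcal{L}(M)$ is at most $\min\{n-1,M\}$. Summing over $n$ and substituting $k=n-1$, the total number $A(N,M)$ of algebraic pairs in $\{2,\ldots,N\}\times\mathcal{L}(M)$ satisfies
\[
A(N,M)\leq\sum_{k=1}^{N-1}\min\{k,M\},
\]
and then
\[
\delta(N,M)\geq1-\frac{A(N,M)}{(N-1)M}.
\]

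Next I evaluate the sum in two cases. If $M\leq N-1$, the sum splits into $\sum_{k=1}^{M}k=M(M+1)/2$ plus $(N-1-M)M$ for the terms with $k>M$. Dividing by $(N-1)M$ gives
\[
\frac{M+1}{2(N-1)}+1-\frac{M}{N-1}=1-\frac{M-1}{2(N-1)}.
\]
Its complement is $(M-1)/[2(N-1)]$, as stated. If $M>N-1$, every term has $\min\{k,M\}=k$, so the sum is $N(N-1)/2$. Dividing by $(N-1)M$ gives $N/(2M)$, with complement $1-N/(2M)$.

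As a sanity check I would confirm that the two formulas describe a consistent bound near the boundary. At $M=N-1$ the first formula gives $(N-2)/[2(N-1)]=1-N/(2(N-1))$, which is exactly the second formula evaluated at $M=N-1$. So the case split in the statement is harmless.

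Finally, the asymptotics. As $N,M\to\infty$ we have $(M-1)/[2(N-1)]\sim M/(2N)$ in the regime $M/N\leq1$. In the other regime the expression $1-N/(2M)$ is already in the stated form. The only real content is Theorem 1; I expect the sole obstacle to be bookkeeping, namely keeping the index shift $k=n-1$ and the $\min$ truncation correct when $M\leq N-1$.
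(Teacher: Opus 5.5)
Your proposal is correct and takes essentially the same route as the paper: for each fixed $n\in\{2,\ldots,N\}$ you bound the number of algebraic values by $\min\{n-1,M\}$ via Theorem 1, sum over $n$, take the complement, and evaluate the sum separately for $M\le N-1$ and $M>N-1$. Your arithmetic in both cases is correct, and your extra consistency check at $M=N-1$ is a harmless addition.
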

\begin{proof}
\begin{singlespace}
\phantom{}
\end{singlespace}

\begin{singlespace}
\medskip{}
\end{singlespace}

\begin{singlespace}
\noindent For a given $n\in\mathbb{Z}_{\geq2}$, we know from Theorem
1 that the $M$ integers $m\in\left\{ 1,2,\ldots,M\right\} $ yield
at most $\min\left\{ n-1,M\right\} $ algebraic values of $\Gamma^{\left(n\right)}\left(m\right)$,
and therefore at least $M-\min\left\{ n-1,M\right\} $ transcendental
values. Summing the latter expression over $n\in\left\{ 2,3,\dots,N\right\} $
then gives a (weak) lower bound for the total number of transcendental
values,
\[
\left(N-1\right)M\delta\left(N,M\right)\ge\sum_{n=2}^{N}\left(M-\min\left\{ n-1,M\right\} \right)=\left(N-1\right)M-\sum_{n=2}^{N}\min\left\{ n-1,M\right\} 
\]
\[
\Longrightarrow\delta\left(N,M\right)\ge\beta\left(N,M\right)=1-\dfrac{\sum_{n=2}^{N}\min\left\{ n-1,M\right\} }{\left(N-1\right)M}.
\]
\end{singlespace}

\begin{singlespace}
Now note that if $M>N-1$, then 
\[
\min\left\{ n-1,M\right\} =n-1,\:2\le n\le N,
\]
 so that
\[
\sum_{n=2}^{N}\min\left\{ n-1,M\right\} =\sum_{n=2}^{N}\left(n-1\right)=\dfrac{\left(N-1\right)N}{2}
\]
\[
\Longrightarrow\delta\left(N,M\right)\ge\beta\left(N,M\right)=1-\dfrac{N}{2M},
\]
where $\beta\left(N,M\right)\asymp1-N/\left(2M\right)$ as $N,M\rightarrow\infty$.

On the other hand, if $M\leq N-1$, then
\[
\min\left\{ n-1,M\right\} =\begin{cases}
n-1, & 2\le n\le M+1\\
M, & M+2\leq n\le N
\end{cases},
\]
so that
\[
\sum_{n=2}^{N}\min\left\{ n-1,M\right\} =\sum_{n=2}^{M+1}\left(n-1\right)+\sum_{n=M+2}^{N}M
\]
\[
=\dfrac{M\left(M+1\right)}{2}+\left(N-M-1\right)M=M\left(N-\dfrac{M+1}{2}\right)
\]
\[
\Longrightarrow\delta\left(N,M\right)\ge\beta\left(N,M\right)=1-\dfrac{N-\left(M+1\right)/2}{N-1}=\dfrac{M-1}{2\left(N-1\right)},
\]
where $\beta\left(N,M\right)\asymp M/\left(2N\right)$ as $N,M\rightarrow\infty$.
\end{singlespace}
\end{proof}
Clearly, the asymptotic lower bound $\beta\left(N,M\right)$ can converge
to different quantities in the limit, depending on $\underset{N,M\rightarrow\infty}{\lim}M/N$.
If $M/N\rightarrow0$, then $\beta\left(N,M\right)\rightarrow0$.
At the other extreme, if $M/N\rightarrow\infty$, then $\beta\left(N,M\right)\rightarrow1$.
For values of $\underset{N,M\rightarrow\infty}{\lim}M/N$ between
$0$ and $\infty$, $\beta\left(N,M\right)$ converges to values between
$0$ and $1$, respectively. Most saliently, if $M/N\rightarrow1$,
then $\beta\left(N,M\right)\rightarrow1/2$.

\subsection{Shifted Lattice Points}

\begin{singlespace}
\phantom{}

\medskip{}

\end{singlespace}

\begin{singlespace}
\noindent For variable $n\in\mathbb{Z}_{\geq1}$, $\kappa\in\mathcal{K}$,
and rationally shifted lattice points $\widetilde{m}\in\mathcal{L}_{\kappa}^{\left(+\right)}\left(\infty\right)$
or $\widetilde{m}\in\mathcal{L}_{\kappa}^{\left(-\right)}\left(\infty\right)$,
we define the bivariate densities
\[
\delta_{\kappa}^{\left(+\right)}\left(N,M\right)=\dfrac{\#\left(\left(n,\widetilde{m}\right)\in\left\{ 1,2,\ldots,N\right\} \times\mathcal{L}_{\kappa}^{\left(+\right)}\left(M\right):\Gamma^{\left(n\right)}\left(\widetilde{m}\right)\textrm{ is transcendental}\right)}{N\left(M+1\right)}
\]
and
\[
\delta_{\kappa}^{\left(-\right)}\left(N,M\right)=\dfrac{\#\left(\left(n,\widetilde{m}\right)\in\left\{ 1,2,\ldots,N\right\} \times\mathcal{L}_{\kappa}^{\left(-\right)}\left(M\right):\Gamma^{\left(n\right)}\left(\widetilde{m}\right)\textrm{ is transcendental}\right)}{N\left(M+1\right)},
\]
respectively. The following result provides a lower bound for $\delta_{\kappa}^{\left(+\right)}\left(N,M\right)$
and $\delta_{\kappa}^{\left(-\right)}\left(N,M\right)$ by leveraging
Theorem 2 to construct an upper bound for the number of algebraic
values among pairs $\left(n,\widetilde{m}\right)\in\left\{ 1,2,\ldots,N\right\} \times\mathcal{L}_{\kappa}^{\left(+\right)}\left(M\right)$
and $\left(n,\widetilde{m}\right)\in\left\{ 1,2,\ldots,N\right\} \times\mathcal{L}_{\kappa}^{\left(-\right)}\left(M\right)$.
\end{singlespace}
\begin{thm}
For $N\in\mathbb{Z}_{\geq1}$, $M\in\mathbb{Z}_{\geq0}$, and $\kappa\in\mathcal{K}$:
\[
\mathit{(i)}\;\delta_{\kappa}^{\left(+\right)}\left(N,M\right)\ge\widetilde{\beta}\left(N,M\right)=\begin{cases}
M/\left(2N\right), & M+1\leq N\\
1-\left(N+1\right)/\left[2\left(M+1\right)\right], & M+1>N
\end{cases}
\]
and
\[
\mathit{(ii)}\;\delta_{\kappa}^{\left(-\right)}\left(N,M\right)\ge\widetilde{\beta}\left(N,M\right)=\begin{cases}
M/\left(2N\right), & M+1\leq N\\
1-\left(N+1\right)/\left[2\left(M+1\right)\right], & M+1>N
\end{cases},
\]
where
\[
\widetilde{\beta}\left(N,M\right)\asymp\begin{cases}
M/\left(2N\right), & M/N\leq1\\
1-N/\left(2M\right), & M/N>1
\end{cases}
\]
as $N,M\rightarrow\infty$.
\end{thm}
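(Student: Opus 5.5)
The plan mirrors the proof of Theorem 3, with Theorem 2 in place of Theorem 1. Parts (i) and (ii) are handled together, since Theorem 2 gives the same bound $n$ for both one-sided lattices.

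Fix $n\in\{1,2,\ldots,N\}$. The set $\mathcal{L}_{\kappa}^{(\pm)}(M)$ has $M+1$ elements, so Theorem 2 bounds the number of algebraic $\Gamma^{(n)}(\widetilde{m})$ by $\min\{n,M+1\}$. Hence there are at least $M+1-\min\{n,M+1\}$ transcendental values. Summing over $n$ gives
\[
N(M+1)\,\delta_{\kappa}^{(\pm)}(N,M)\ge N(M+1)-\sum_{n=1}^{N}\min\{n,M+1\},
\]
so that $\widetilde{\beta}(N,M)=1-\sum_{n=1}^{N}\min\{n,M+1\}/[N(M+1)]$.

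Next I split into cases. If $M+1>N$, then $\min\{n,M+1\}=n$ for every $n\le N$. The sum is $N(N+1)/2$, which yields $1-(N+1)/[2(M+1)]$.

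If $M+1\le N$, the sum splits as $\sum_{n=1}^{M+1}n+\sum_{n=M+2}^{N}(M+1)$. This equals
\[
\frac{(M+1)(M+2)}{2}+(N-M-1)(M+1)=(M+1)\left(N-\frac{M}{2}\right).
\]
Dividing by $N(M+1)$ gives $1-(N-M/2)/N=M/(2N)$, which matches the statement.

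For the asymptotics, in the first case $M/(2N)$ is already the stated form. In the second case $(N+1)/(M+1)\sim N/M$ as $N,M\to\infty$. The case boundary $M+1\le N$ versus $M/N\le1$ differs only at $M=N$, where the two expressions are asymptotically equivalent, so the displayed piecewise form follows.

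The only step needing care is the bookkeeping in the second case: the lattice has $M+1$ points, not $M$, so the index shifts must be tracked exactly. No genuine obstacle is expected, as everything reduces to Theorem 2 and arithmetic sums.
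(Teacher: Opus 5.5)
Your proposal is correct and follows the paper's proof essentially line for line. You bound the number of algebraic values for each fixed $n$ by $\min\{n,M+1\}$ via Theorem 2, sum over $n\in\{1,\ldots,N\}$, and evaluate $\sum_{n=1}^{N}\min\{n,M+1\}$ separately for $M+1>N$ and $M+1\le N$, arriving at exactly the paper's expressions (your observation that the two case boundaries differ only at $M=N$, where both forms equal $1/2$, is a small check the paper leaves implicit).
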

\begin{proof}
\begin{singlespace}
\phantom{}
\end{singlespace}

\begin{singlespace}
\medskip{}
\end{singlespace}

\begin{singlespace}
\noindent (i), (ii) For a given $n\in\mathbb{Z}_{\geq1}$, we know
from Theorem 2 that the $M+1$ integers $m\in\left\{ 0,1,\ldots,M\right\} $
yield at most $\min\left\{ n,M+1\right\} $ algebraic values of $\Gamma^{\left(n\right)}\left(\widetilde{m}\right)$
for either $\widetilde{m}\in\mathcal{L}_{\kappa}^{\left(+\right)}\left(M\right)$
or $\widetilde{m}\in\mathcal{L}_{\kappa}^{\left(-\right)}\left(M\right)$,
and therefore at least $M+1-\min\left\{ n,M+1\right\} $ transcendental
values. Summing the latter expression over $n\in\left\{ 1,2,\dots,N\right\} $
then gives a (weak) lower bound for the total number of transcendental
values,
\[
N\left(M+1\right)\delta_{\kappa}^{\left(\bullet\right)}\left(N,M\right)\ge\sum_{n=1}^{N}\left(M+1-\min\left\{ n,M+1\right\} \right)=N\left(M+1\right)-\sum_{n=1}^{N}\min\left\{ n,M+1\right\} 
\]
\[
\Longrightarrow\delta_{\kappa}^{\left(\bullet\right)}\left(N,M\right)\ge\widetilde{\beta}\left(N,M\right)=1-\dfrac{\sum_{n=1}^{N}\min\left\{ n,M+1\right\} }{N\left(M+1\right)},
\]
where $\delta_{\kappa}^{\left(\bullet\right)}$ represents both $\delta_{\kappa}^{\left(+\right)}$
and $\delta_{\kappa}^{\left(-\right)}$ in turn. 
\end{singlespace}

\begin{singlespace}
Now note that if $M+1>N$, then 
\[
\min\left\{ n,M+1\right\} =n,\:1\le n\le N,
\]
 so that
\[
\sum_{n=1}^{N}\min\left\{ n,M+1\right\} =\sum_{n=1}^{N}n=\dfrac{N\left(N+1\right)}{2}
\]
\[
\Longrightarrow\delta_{\kappa}^{\left(\bullet\right)}\left(N,M\right)\ge\widetilde{\beta}\left(N,M\right)=1-\dfrac{N+1}{2\left(M+1\right)},
\]
where $\widetilde{\beta}\left(N,M\right)\asymp1-N/\left(2M\right)$
as $N,M\rightarrow\infty$.

On the other hand, if $M+1\leq N$, then
\[
\min\left\{ n,M+1\right\} =\begin{cases}
n, & 1\le n\le M+1\\
M+1, & M+2\leq n\le N
\end{cases},
\]
so that
\[
\sum_{n=1}^{N}\min\left\{ n,M+1\right\} =\sum_{n=1}^{M+1}n+\sum_{n=M+2}^{N}\left(M+1\right)
\]
\[
=\dfrac{\left(M+1\right)\left(M+2\right)}{2}+\left(N-M-1\right)\left(M+1\right)=\left(M+1\right)\left(N-\dfrac{M}{2}\right)
\]
\[
\Longrightarrow\delta_{\kappa}^{\left(\bullet\right)}\left(N,M\right)\ge\widetilde{\beta}\left(N,M\right)=1-\dfrac{N-M/2}{N}=\dfrac{M}{2N},
\]
where $\widetilde{\beta}\left(N,M\right)\asymp M/\left(2N\right)$
as $N,M\rightarrow\infty$.
\end{singlespace}
\end{proof}
As with the asymptotic lower bound given by Theorem 3, $\widetilde{\beta}\left(N,M\right)$
can converge to different quantities in the limit, depending on $\underset{N,M\rightarrow\infty}{\lim}M/N$.
In particular: if $M/N\rightarrow0$, then $\widetilde{\beta}\left(N,M\right)\rightarrow0$;
if $M/N\rightarrow\infty$, then $\widetilde{\beta}\left(N,M\right)\rightarrow1$;
and if $\underset{N,M\rightarrow\infty}{\lim}M/N=L\in\left(0,\infty\right)$,
then $\widetilde{\beta}\left(N,M\right)\rightarrow b\in\left(0,1\right)$
(with $b=1/2$ when $L=1$).

\section{Conclusion}

\begin{singlespace}
\noindent The present study investigated the transcendence of $\Gamma^{\left(n\right)}\left(m\right)$
at positive lattice points $m\in\mathcal{L}\left(\infty\right)$ and
$\Gamma^{\left(n\right)}\left(\widetilde{m}\right)$ at rationally
shifted lattice points $\widetilde{m}\in\mathcal{L}_{\kappa}^{\left(+\right)}\left(\infty\right)$
or $\widetilde{m}\in\mathcal{L}_{\kappa}^{\left(-\right)}\left(\infty\right)$
for $\kappa\in\left(0,1\right)\cap\mathbb{Q}$ such that $\Gamma\left(\kappa\right)$
is transcendental.
\end{singlespace}

\begin{singlespace}
For fixed $n$, we derived strong upper bounds for the total number
of algebraic $\Gamma^{\left(n\right)}\left(m\right)$ and $\Gamma^{\left(n\right)}\left(\widetilde{m}\right)$,
showing that the maximum number of algebraic $\Gamma^{\left(n\right)}\left(m\right)$
is $n-1$ and the maximum number of algebraic $\Gamma^{\left(n\right)}\left(\widetilde{m}\right)$
is $n$ for each one-sided shifted lattice (i.e., $\widetilde{m}\in\mathcal{L}_{\kappa}^{\left(+\right)}\left(\infty\right)$
or $\widetilde{m}\in\mathcal{L}_{\kappa}^{\left(-\right)}\left(\infty\right)$).
These results imply that the density of transcendental $\Gamma^{\left(n\right)}\left(m\right)$
among $m\in\mathcal{L}\left(M\right)$ has lower bound $\beta_{n}\left(M\right)=1-\min\left\{ n-1,M\right\} /M\asymp1-\left(n-1\right)/M\rightarrow1$
as $M\rightarrow\infty$, whereas the density of transcendental $\Gamma^{\left(n\right)}\left(\widetilde{m}\right)$
among either $\widetilde{m}\in\mathcal{L}_{\kappa}^{\left(+\right)}\left(M\right)$
or $\widetilde{m}\in\mathcal{L}_{\kappa}^{\left(-\right)}\left(M\right)$
has lower bound $\widetilde{\beta}_{n}\left(M\right)=1-\min\left\{ n,M+1\right\} /\left(M+1\right)\asymp1-n/M\rightarrow1$
as $M\rightarrow\infty$.

We then allowed $n$ to vary, evaluating the bivariate densities of
both transcendental $\Gamma^{\left(n\right)}\left(m\right)$ among
$\left(n,m\right)\in\left\{ 2,3,\ldots,N\right\} \times\mathcal{L}\left(M\right)$
and transcendental $\Gamma^{\left(n\right)}\left(\widetilde{m}\right)$
among $\left(n,\widetilde{m}\right)\in\left\{ 1,2,\ldots,N\right\} \times\mathcal{L}_{\kappa}^{\left(+\right)}\left(M\right)$
and $\left(n,\widetilde{m}\right)\in\left\{ 1,2,\ldots,N\right\} \times\mathcal{L}_{\kappa}^{\left(-\right)}\left(M\right)$.
In both cases, the analysis yielded lower bounds -- denoted by $\beta\left(N,M\right)$
and $\widetilde{\beta}\left(N,M\right)$, respectively -- that depended
approximately on the ratio $M/N$. In particular, $\beta\left(N,M\right)\asymp\widetilde{\beta}\left(N,M\right)\asymp M/\left(2N\right)$
for $M/N\leq1$ and $\beta\left(N,M\right)\asymp\widetilde{\beta}\left(N,M\right)\asymp1-N/\left(2M\right)$
for $M/N>1$.
\end{singlespace}

Given the conspicuous gap between the relative strengths of (i) the
lower bound for fixed $m$ in Theorem 2 of Powers {[}3{]} (i.e., $\beta\left(N\right)\asymp1/\sqrt{N}\;\textrm{as}\;N\rightarrow\infty$),
and (ii) the lower bound for fixed $n$ in Corollary 1 of the present
article (i.e., $\beta_{n}\left(M\right)\asymp1-\left(n-1\right)/M\;\textrm{as}\;M\rightarrow\infty$),
the most obvious direction for further research is to improve the
former result. Undoubtedly, this will require the application of new
techniques, possibly assisted by a deeper understanding of how recursive
relations among the $\Gamma^{\left(n\right)}\left(m\right)$ for the
separate $n$ and $m$ dimensions are interrelated.

\medskip{}
\medskip{}

\end{document}